\newtheorem{theorem}{Theorem}[section]
\newtheorem{lemma}[theorem]{Lemma}
\numberwithin{equation}{section}
\def\UU{{\mathcal U}}
\providecommand{\customgenericname}{}
\newcommand{\newcustomtheorem}[2]{%
	\newenvironment{#1}[1]
	{%
		\renewcommand\customgenericname{#2}%
		\renewcommand\theinnercustomgeneric{##1}%
		\innercustomgeneric
	}
	{\endinnercustomgeneric}
}
\theoremstyle{definition}
\newtheorem{definition}[theorem]{Definition}
\begin{document}
\begin{sloppypar}
%\linenumbers

%\author
%{ \small  $\mbox{Zhengtian Qiu$ ^{1}$}$ and $\mbox{Adolfo Ballester-Bolinches$ ^{2}$}$\thanks{Corresponding author.}      \\
%	\small  \textit{$ ^{1}$School of Mathematics and Statistics, Southwest University, Beibei, Chongqing, 400715,  China}\\
%	\small  \textit{$ ^{2}$Departament d'$ \grave{\mathrm{A}} $lgebra, Universitat de Val$ \grave{\mathrm{e}} $ncia, Dr. Moliner, 50,
%		46100 Burjassot, Val$ \grave{\mathrm{e}} $ncia, Spain}\\
%	\small  \textit{E-mail: qztqzt506@163.com\ \ \ \ \ \ \ \   Adolfo.Ballester@uv.es}}

%\date{}
%\maketitle

\begin{center}
{\Large \bf Finite groups with some subgroups of prime power order satisfying the partial $ \Pi $-property

\renewcommand{\thefootnote}{\fnsymbol{footnote}}

\footnotetext[1]
{Corresponding author.}

}\end{center}

                        \vskip0.6cm
\begin{center}

                       Zhengtian Qiu$ ^{1} $ and Adolfo Ballester-Bolinches$^{2, \ast}$

                            \vskip0.5cm

       $ ^{1} $School of Mathematics and Statistics, Southwest University,

       Beibei, Chongqing 400715, P. R. China

       $ ^{2} $Departament de Matem$ \grave{\mathrm{a}} $tiques, Universitat de Val$ \grave{\mathrm{e}} $ncia, Dr. Moliner, 50,
       46100 Burjassot, Valencia, Spain

      E-mail addresses:  qztqzt506@163.com \,    \  Adolfo.Ballester@uv.es

\end{center}

                          \vskip0.5cm

\begin{abstract}
	Let $ H $ be a subgroup of a finite group $ G $.  We say that $ H $ satisfies the partial $ \Pi  $-property in $ G $ if there exists a $G$-chief series $ \varGamma_{G}: 1 =G_{0} < G_{1} < \cdot\cdot\cdot < G_{n}= G $ of $ G $ such that $ | G / G_{i-1} : N _{G/G_{i-1}} (HG_{i-1}/G_{i-1}\cap G_{i}/G_{i-1})| $ is a $ \pi (HG_{i-1}/G_{i-1}\cap G_{i}/G_{i-1}) $-number for every $ G $-chief factor $ G_{i}/G_{i-1} $ of $ \varGamma_{G} $, $1\leq i\leq n$.  In this paper, we investigate the structure of a finite group $ G $  under the assumption that some subgroups of prime power order satisfy  the partial $ \Pi  $-property.

	%we study the influence of some  subgroups of prime-power order satisfying  the partial $ \Pi  $-property  on the structure of a finite group $ G $.
\end{abstract}

{\hspace{0.88cm} \small \textbf{Keywords:} Finite group, $ p $-soluble group, partial $ \Pi  $-property.}
	
\vskip0.1in

{\hspace{0.88cm} \small \textbf{Mathematics Subject Classification (2020):} 20D10, 20D20.}

%\vskip1cm
\section{Introduction}

%\setcounter{theorem}{0}
%\renewcommand{\thetheorem}{\Alph{theorem}}
%\renewcommand{\thetheorem}{\arabic{theorem}}
%\textsf{GAP}
%\texttt{GAP}

All groups considered in this paper are finite.

%We use conventional notions as in \cite{MR1169099} or \cite{Gorenstein-1980}.

The present paper is a further contribution to the research project that analyses the impact of the embedding of some relevant families of subgroups on the structure of a group. The subgroup embedding property studied here is the \textit{partial  $ \Pi $-property} introduced by Chen and Guo in \cite{Chen-2013}, which  generalises a large number of known embedding properties (see \cite[Section 7]{Chen-2013}), and has raised a considerable interest in recent years.

\begin{definition}
We say that a subgroup $H$ of a group $G$ satisfies the \textit{partial  $ \Pi $-property} in $ G $ if there exists a $G$-chief series $ \varGamma_{G}: 1 =G_{0} < G_{1} < \cdot\cdot\cdot < G_{n}= G $ of $ G $ such that $ | G / G_{i-1} : N _{G/G_{i-1}} (HG_{i-1}/G_{i-1}\cap G_{i}/G_{i-1})| $ is a $ \pi (HG_{i-1}/G_{i-1}\cap G_{i}/G_{i-1}) $-number for every $ G $-chief factor $ G_{i}/G_{i-1} $ of $ \varGamma_{G} $, $1\leq i\leq n$.
\end{definition}

As usual, $ \pi(G) $ denotes the set of all primes dividing the order of a group $G$.

Our aim here is to solve three problems concerning the structural influence of the partial $\Pi$-property of some subgroups of prime power order and confirm that the partial $\Pi$-property is a true generalisation of the partial {\rm CAP}-property studied in \cite{Adolfo-JA, Adolfo-JPAA, Adolfo-2015, Qian-2020}.

Chen and Guo proved in \cite[Proposition 1.6]{Chen-2013} that a group $G$ in which every subgroup of a Sylow $ p $-subgroup $ P $ of order $p$, $p$ a prime, and every cyclic subgroup of order $4$ (if $p =2$ and $P$ is not quaternion-free), satisfy the partial $\Pi$-property  is $p$-supersoluble, that is, every chief factor of $G$ divisible by $p$ is cyclic.

Our first main result provides a structural information of a group $G$ in which every subgroup of a Sylow $ p $-subgroup of order $p^2$ satisfies the partial $\Pi$-property. Without loss of generality, we may assume that the Sylow $p$-subgroups of $G$ have order at least $p^2$ and $  O_{p'}(G) = 1 $.

\begin{customtheorem}{A}\label{2-minimal}
	Let $p$ be a prime, and let $ P $ be a Sylow $ p $-subgroup of a group $ G $. Assume that $  O_{p'}(G) = 1 $ and $|P| \geq p^2$. If every subgroup of $ P $ of order $p^2$ satisfies the partial $ \Pi $-property in $ G $, then  $ P=O_{p}(G)$ and $ G = P\rtimes H $, the semidirect product of $P$ and $H$, where $ H \in {\rm Hall}_{p'}(G) $, and one of the following statements holds:
	
	%{\rm (1)} $ |P|\leq p $;
	
	{\rm (1)} $ G $ is $ p $-supersoluble;
	
	{\rm (2)} $P$ is a minimal normal subgroup of $ G $ of order $p^2$;
	
	{\rm (3)} $ |P|\geq p^{4} $ and $H$ is cyclic. Furthermore, $ P = V_{1} \times \cdot\cdot\cdot \times V_{s} $, $ s \geq 2 $, where $ V_{i} $ is an $ H $-isomorphic irreducible $ \mathbb{F}_{p}[H] $-module of dimension $2$ for all $ 1\leq i\leq s $, where  $\mathbb{F}_{p}$ is the finite field of $p$-elements.
\end{customtheorem}

Chen and Guo proved in \cite[Proposition 1.4]{Chen-2013} that if the maximal subgroups of a Sylow $p$-subgroup of a group $G$ satisfy the $\Pi$-property in $G$, then either $G$ is $p$-supersoluble or the order of the Sylow $p$-subgroups of $G$ is $p$.

Our second main result describes the structure of a group $G$ in which the second maximal subgroups of a Sylow $p$-subgroup satisfy the partial $ \Pi $-property in $G$. We may assume that the Sylow $p$-subgroups of $G$ have order at least $p^2$ and $  O_{p'}(G) = 1 $.

\begin{customtheorem}{B}\label{2-maximal}
	Let $ P $ be a Sylow $ p $-subgroup of a group $ G $. Assume that $ O_{p'}(G) = 1 $ and $|P| \geq p^2$. If every $2$-maximal subgroup of $ P $ satisfies the partial $ \Pi $-property in $ G $, then one of the following  statements holds:
	
	%{\rm (1)} $ |P|\leq p $;
	
	{\rm (1)} $ G $ is $ p $-supersoluble;
	
	{\rm (2)}  $ |P|=p^{2} $ and $P=O_{p}(G)$ is a minimal normal subgroup of $G$;
	
	{\rm (3)} $ |P|=p^{2} $ and $ G $ is non-$ p $-soluble;
	
	{\rm (4)}  $ p = 2 $ and  $ P $ is isomorphic to $ Q_{8} $;

	{\rm (5)} $ |P|\geq p^{3} $ and $ G = P\rtimes H $, where $ H \in {\rm Hall}_{p'}(G) $ is cyclic. Furthermore, $ \Phi(P) $ is the intersection of all $ 2 $-maximal subgroups of $ P $, and $ P/\Phi(P) = V_{1} \times \cdot\cdot\cdot \times V_{s} $, where $ V_{i} $ is an $ H $-isomorphic irreducible $ \mathbb{F}_{p} [H] $-module of dimension $ 2 $ for all $1\leq i\leq s$.
\end{customtheorem}

A remarkable extension of Chen and Guo's results is due to the first author, Liu and Chen.

\begin{theorem}[{\cite[Theorem 1.3]{Qiu}}]\label{know}
	Let $ P $ be a Sylow $ p $-subgroup of a group $ G $, and let $ d $ be a power of $ p $ such that $ 1 < d < |P| $. Assume that every subgroup of  $ P $ of order $ d $, and every cyclic subgroup of $ P $ of order $ 4 $ (when $ d=2 $ and $ P $ is not quaternion-free) satisfy the partial $ \Pi $-property in $ G $.  Then $ G $ is $ p $-soluble of $p$-length at most $1$.
\end{theorem}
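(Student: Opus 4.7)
The plan is to argue by induction on $|G|$, taking $G$ to be a counterexample of minimal order. The first reduction is to establish $O_{p'}(G) = 1$. For any non-trivial normal $p'$-subgroup $N$, the partial $\Pi$-property descends to $G/N$: every subgroup of a Sylow $p$-subgroup of $G/N$ equals $HN/N$ for some $H \leq P$ of the same order, a chief series of $G$ witnessing the property projects above $N$ to a chief series of $G/N$, and the relevant normalizer indices are preserved. Minimality then yields that $G/N$ is $p$-soluble with $p$-length at most $1$; since $N$ is a $p'$-group, so is $G$, a contradiction. Hence $O_{p'}(G) = 1$.

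The extremal case $d = p$ is disposed of by Chen and Guo's Proposition 1.6, which gives $p$-supersolubility of $G$ and therefore $p$-length at most $1$, so assume $d \geq p^{2}$. I would next show that every minimal normal subgroup of $G$ is an elementary abelian $p$-group, which, combined with $O_{p'}(G) = 1$, yields the $p$-solubility of $G$. Suppose for contradiction that $N = S_{1} \times \cdots \times S_{k}$ is a non-abelian minimal normal subgroup, with each $S_{i}$ a copy of a non-abelian simple group $S$ of order divisible by $p$. One chooses a subgroup $H \leq P$ of order $d$ so that $H \cap N$ is a Sylow $p$-subgroup of $N$ when $|P \cap N| \leq d$, or a well-placed $p$-subgroup of $N$ of order $d$ when $|P \cap N| > d$, and refines $1 < N < G$ to a chief series of $G$. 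Applying the partial $\Pi$-property at the factor $N$ forces $|G : N_{G}(H \cap N)|$ to be a $p$-power; projecting onto a simple factor $S_{i}$ yields a non-trivial $p$-subgroup $Q$ of $S$ with $|S : N_{S}(Q)|$ a $p$-power, which via Sylow counting (when $Q$ is Sylow, the index is also $\equiv 1 \pmod p$, hence $1$) or via the list of proper subgroups of prime-power index in non-abelian simple groups contradicts the non-abelian simplicity of $S$.

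Once every minimal normal subgroup of $G$ is an elementary abelian $p$-subgroup of $O_{p}(G)$, $G$ is $p$-soluble, and the remaining task is to show $p$-length at most $1$, i.e.\ $G/O_{p}(G)$ is a $p'$-group (since $O_{p'}(G) = 1$). The main obstacle lies here, because the naive induction on $G/N$ for $N \leq O_{p}(G)$ does not directly preserve the hypothesis: subgroups of $P/N$ of order $d$ need not lift to subgroups of $P$ of order exactly $d$, and subgroups of $P$ of order $d$ that contain $N$ descend to subgroups of order $d/|N|$ rather than $d$. The workaround I would adopt is to exploit the $p$-soluble structure directly: choose a chief series of $G$ passing through $O_{p}(G)$, and use the partial $\Pi$-property applied to subgroups $H$ of $P$ of order $d$ with prescribed images in $P/O_{p}(G)$ to show that any $p$-chief factor of $G$ lying strictly above $O_{p}(G)$ would violate the normalizer condition enforced by that series. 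This rules out such chief factors, forcing $G/O_{p}(G) = G/O_{p',p}(G)$ to be a $p'$-group and hence $G$ to have $p$-length at most $1$, the contradiction that completes the proof.
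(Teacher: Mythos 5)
This theorem is not proved in the present paper at all: it is imported verbatim from \cite{Qiu} (Theorem 1.3 there), so your attempt can only be judged against the definition and the auxiliary lemmas reproduced here, and judged that way it has genuine gaps. The recurring problem is that you use the partial $\Pi$-property as if the chief series could be prescribed: you ``refine $1<N<G$ to a chief series and apply the property at the factor $N$'', and later you ``choose a chief series passing through $O_{p}(G)$'' and invoke ``the normalizer condition enforced by that series''. The property is existential --- for each subgroup $H$ there is \emph{some} chief series along which the indices are controlled, and different subgroups come with different, unknown series. The only licence to route the series through a given normal subgroup is Lemma~\ref{pass}, which requires $H\leq N$. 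In your argument excluding a non-abelian minimal normal subgroup $N$ this covers the sub-case where you put $H$ inside $N$ (and there, choosing $H$ normal in $P$, the bottom factor gives $H\unlhd G$ and a contradiction with minimality), but in the sub-case $|P\cap N|\leq d$ you have $H\not\leq N$ and no right to a series through $N$; the step can be repaired by taking an arbitrary witnessing series and examining the factor $G_{i}/G_{i-1}$ that covers $N$ --- since $P\cap N\leq H$, the intersection $HG_{i-1}/G_{i-1}\cap G_{i}/G_{i-1}$ contains a Sylow $p$-subgroup of that factor, and then your Frattini/Sylow-counting (or prime-power-index) contradiction applies --- but as written the move is unjustified.

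More seriously, the core of the theorem --- that once $G$ is $p$-soluble with $O_{p'}(G)=1$ one actually has $P=O_{p}(G)$, i.e.\ $p$-length at most $1$ --- is not proved. You rightly observe that induction on $G/N$ for a minimal normal $p$-subgroup $N$ is delicate: by Lemma~\ref{over} the hypothesis descends only for subgroups containing $N$, the residual parameter is $d/|N|$, and it degenerates when $|N|=d$ or when $d/|N|=2$ and the order-$4$/quaternion-free proviso enters. But the proposed ``workaround'' is only a declaration of intent: no mechanism is given by which a $p$-chief factor above $O_{p}(G)$ would ``violate the normalizer condition'', and the phrasing again presupposes a chief series of your own choosing. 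This is precisely where the published proof of \cite{Qiu} does its real work, using the constraints of Lemma~\ref{order-d} on minimal normal subgroups and a careful case analysis (including $d=2$ and the quaternion-free situation). So the proposal establishes, after repair, the $p$-solubility reduction in outline, but it does not contain a proof of the $p$-length statement, and hence not of the theorem.
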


Our third main result describes the structure of the groups of $p$-rank greater than $1$ satisfying the hypotheses of Theorem~\ref{know}. Recall that the $p$-rank of a $p$-soluble group $G$ is the largest integer $ k $ such that $ G $ has a chief factor of order $ p^{k} $. Note that a $p$-soluble group $G$ is $p$-supersoluble if and only if the $p$-rank of $G$ is equal to $1$.

%\begin{hypothesis}\label{hy}
	%Let $ P \in {\rm Syl}_{p}(G) $ and let $ d $ be a  power of $ p $ such that $ 1 < d < |P| $. Assume that every subgroup of $ P $ of order $ d $ satisfies the partial $ \Pi $-property in $ G $, and assume further that every cyclic subgroup of $ P $ of order $ 4 $  satisfies the partial $ \Pi $-property in $ G $ when $ d = 2 $ and $ P $  is not quaternion-free.
%\end{hypothesis}

%\setcounter{theorem}{0}
%\renewcommand{\thetheorem}{\Alph{theorem}}

%At last, we study the structure of a group $ G $ satisfying Hypothesis \ref{hy} with $ O_{p'}(G)=1 $ and $ r_{p}(G) > 1 $.

  \begin{customtheorem}{C}\label{prime-power}
Let $ P \in {\rm Syl}_{p}(G) $ and let $ d $ be a  power of $ p $ such that $ 1 < d < |P| $. Assume that every subgroup of $ P $ of order $ d $ satisfies the partial $ \Pi $-property in $ G $, and assume further that every cyclic subgroup of $ P $ of order $ 4 $  satisfies the partial $ \Pi $-property in $ G $ when $ d = 2 $ and $ P $  is not quaternion-free. If $ O_{p'}(G)=1 $ and the $p$-rank of $G$ is greater than $1$, then $ G=P\rtimes H $, where $ H\in \mathrm{Hall}_{p'}(G) $.  Furthermore, if $ V $ is an irreducible $ \mathbb{F}_{p}[H] $-submodule of $ P/\Phi(P) $,  and write
  	
  	\begin{center}
  		$ \mathrm{dim}_{\mathbb{F}_{p}}V=k $, $ n=\log_p d  -\log_p |\Phi(P)| $, and $ m=\log_p |P/\Phi(P)| $,
  	\end{center}
  	
  	\noindent	then the following statements hold:
  	
  	{\rm (1)} $ P/\Phi(P) $ is a homogeneous $ \mathbb{F}_{p}[H] $-module and every irreducible submodule of $ P/\Phi(P) $ is not absolutely irreducible;

  	{\rm (2)} $ n \geq k\geq 2 $, $ k$ divides $\gcd(m, n) $;
  	
  	{\rm (3)} $ H $ is cyclic.
  \end{customtheorem}

\section{Preliminaries}

Before we can begin our proofs of Theorems~\ref{2-minimal}, ~\ref{2-maximal} and~\ref{prime-power}, we need a number of preliminary results. The first of these is very useful in induction arguments.

\begin{lemma}[{\cite[Lemma 2.1]{Chen-2013}}]\label{over}
	Let $ H $ be a subgroup of a group $ G $ and $ N\unlhd G $. If either $ N \leq H $ or $ \gcd(|H|, |N|)=1 $ and $ H $ satisfies the partial $ \Pi $-property
	in $ G $, then $ HN/N $ satisfies the partial $ \Pi $-property in $ G/N $.
\end{lemma}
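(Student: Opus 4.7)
The plan is to take a $G$-chief series that witnesses the partial $\Pi$-property of $H$ and transport it through the quotient map $G\twoheadrightarrow G/N$. Fix such a series $1=G_0<G_1<\cdots<G_n=G$. The sequence $\{G_iN/N\}$ in $G/N$ has consecutive factors $G_iN/G_{i-1}N$ that are either trivial (precisely when $G_i\leq G_{i-1}(G_i\cap N)$) or canonically $G$-isomorphic to $G_i/G_{i-1}$ (precisely when $G_i\cap N\leq G_{i-1}$). Deleting the repetitions yields a $(G/N)$-chief series whose non-trivial factors correspond to the indices $i$ with $G_i\cap N\leq G_{i-1}$. The condition then has to be verified only at these non-trivial indices.

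For such an index, set $S:=HG_{i-1}\cap G_i$ and $M:=HG_{i-1}N\cap G_iN$. The central claim is that $M=SN$. The inclusion $SN\subseteq M$ is immediate. For the reverse, Dedekind's law (using $N\leq HG_{i-1}N$) gives $M=(HG_{i-1}N\cap G_i)N$, so it suffices to show $HG_{i-1}N\cap G_i\subseteq SN$. Take $y\in HG_{i-1}N\cap G_i$ and write $y=hgn$ with $h\in H$, $g\in G_{i-1}$, $n\in N$; then $hg=yn^{-1}\in HG_{i-1}\cap G_iN=G_{i-1}(H\cap G_iN)$ by Dedekind, so $hg=g'h'$ with $g'\in G_{i-1}$ and $h'\in H\cap G_iN$.

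To conclude, I need to control $H\cap G_iN$ in each of the two hypothesized cases. If $N\leq H$, writing $h'=g_1n_1$ with $g_1\in G_i$ and $n_1\in N\leq H$ forces $g_1=h'n_1^{-1}\in H\cap G_i$, hence $H\cap G_iN=(H\cap G_i)N$. If $\gcd(|H|,|N|)=1$, the subgroups $G_i/(G_i\cap N)$ and $N/(G_i\cap N)$ are normal in $G_iN/(G_i\cap N)$, meet trivially, and have product the whole group, so $G_iN/(G_i\cap N)=G_i/(G_i\cap N)\times N/(G_i\cap N)$; for $h\in H\cap G_iN$ the image of $h(G_i\cap N)$ in $N/(G_i\cap N)$ has order dividing both $|h|$ (a $\pi(H)$-number) and $|N/(G_i\cap N)|$ (a $\pi(N)$-number), so it is trivial, whence $h\in G_i$ and $H\cap G_iN=H\cap G_i$. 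In both cases a direct substitution then places $y=g'h'n$ inside $SN$, establishing $M=SN$.

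The final step is quick. Since $N\unlhd G$, any $g$ normalising $S$ normalises $SN$, so $N_G(S)\leq N_G(M)$ and $|G:N_G(M)|$ divides $|G:N_G(S)|=|G/G_{i-1}:N_{G/G_{i-1}}(S/G_{i-1})|$, a $\pi(S/G_{i-1})$-number by the partial $\Pi$-hypothesis on $H$. Moreover, using $S\leq G_i$ together with $G_i\cap G_{i-1}N=G_{i-1}(G_i\cap N)=G_{i-1}$ (by Dedekind and $G_i\cap N\leq G_{i-1}$), one has $S\cap G_{i-1}N=G_{i-1}$, so $M/G_{i-1}N=SN/G_{i-1}N\cong S/G_{i-1}$ and therefore $\pi(M/G_{i-1}N)=\pi(S/G_{i-1})$. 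Hence the required normalizer index in $G/N$ is a $\pi(M/G_{i-1}N)$-number, completing the verification. The main obstacle in carrying out this plan is the identity $H\cap G_iN=H\cap G_i$ in the coprime case, where the internal direct product decomposition of $G_iN/(G_i\cap N)$ combined with the coprimality hypothesis is essential.
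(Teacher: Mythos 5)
Your proof is correct. Note that the paper does not prove this lemma at all --- it is imported verbatim from \cite[Lemma 2.1]{Chen-2013} --- so there is no in-paper argument to compare against; your write-up is essentially the standard proof one expects behind that citation. All the key steps check out: the refined series $\{G_iN/N\}$ is a chief series of $G/N$ whose nontrivial factors occur exactly at the indices with $G_i\cap N\leq G_{i-1}$; the identity $HG_{i-1}N\cap G_iN=(HG_{i-1}\cap G_i)N$ holds via Dedekind together with $H\cap G_iN=(H\cap G_i)N$ (when $N\leq H$) or $H\cap G_iN=H\cap G_i$ (in the coprime case, via the direct decomposition of $G_iN/(G_i\cap N)$); and the normaliser and $\pi$-set bookkeeping at the end, including $S\cap G_{i-1}N=G_{i-1}$ so that $\pi(M/G_{i-1}N)=\pi(S/G_{i-1})$, is accurate. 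The only point worth making explicit is that the hypothesis is also used vacuously at the collapsed indices (where the factor $G_iN/G_{i-1}N$ is trivial and nothing needs to be checked), which you handle correctly by deleting repetitions before verifying the condition.
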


%\begin{lemma}[{\cite[Lemma 2.2]{Qiu-Liu-Chen}}]\label{subgroup}
	%Let  $ H\leq N \leq G $.  If $ H $ is a $ p $-subgroup of $ G $ and $ H $ satisfies the partial $ \Pi $-property in $ G $, then $ H $ satisfies the partial $ \Pi $-property in $ N $.
%\end{lemma}

The next lemma is crucial to prove useful results on the partial $\Pi$-property.

\begin{lemma}[{\cite[Lemma 2.3]{Qiu-Liu-Chen}}]\label{pass}
	Let $ H $ be a $ p $-subgroup of a group $ G $  and $ N $ be  a normal subgroup of $ G $  containing $ H $. If  $ H $ satisfies the partial $ \Pi $-property in $ G $, then $ G $ has  a chief series $$ \varOmega_{G}: 1 =G^{*}_{0} < G^{*}_{1} < \cdot\cdot\cdot <G^{*}_{r}=N < \cdot\cdot\cdot < G^{*}_{n}= G $$  passing through $ N $ such that $ |G:N_{G}(HG^{*}_{i-1}\cap G^{*}_{i})| $ is a $ p $-number  for every $ G $-chief factor $ G^{*}_{i}/G^{*}_{i-1} $ $ (1\leq i\leq n) $ of $ \varOmega_{G} $.
\end{lemma}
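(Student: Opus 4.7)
The plan is to reduce to $G=P\rtimes H$ with $H\in\mathrm{Hall}_{p'}(G)$, translate the partial $\Pi$-property hypothesis into a cover/avoidance condition on subspaces of $P/\Phi(P)$ via Lemma~\ref{pass}, and then derive parts (1)--(3) by module-theoretic analysis of $P/\Phi(P)$ as an $\mathbb{F}_p[H]$-module.

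For the setup, Theorem~\ref{know} gives that $G$ is $p$-soluble of $p$-length at most one. Combined with $O_{p'}(G)=1$, this forces $P=O_p(G)\trianglelefteq G$, and Schur--Zassenhaus yields $G=P\rtimes H$ with $H\in\mathrm{Hall}_{p'}(G)$. Since $\gcd(|H|,p)=1$, Maschke's theorem gives $P/\Phi(P)=V_1\oplus\cdots\oplus V_t$ as a sum of simple $\mathbb{F}_p[H]$-submodules. For each subgroup $H_0\leq P$ of order $d$, Lemma~\ref{pass} applied to $H_0\leq P\trianglelefteq G$ produces a $G$-chief series through $P$ for which every intersection $H_0G^*_{i-1}\cap G^*_i$ is normalised by some Hall $p'$-subgroup; for chief factors strictly between $\Phi(P)$ and $P$ (which are simple $\mathbb{F}_p[H]$-modules), this forces $\overline{H_0}=H_0\Phi(P)/\Phi(P)$ either to cover or to avoid each such factor in the classical sense. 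Specialising to $H_0\supseteq\Phi(P)$ of order $d$, so that $\overline{H_0}$ ranges over all $n$-dimensional $\mathbb{F}_p$-subspaces of $P/\Phi(P)$ as $H_0$ varies, the hypothesis becomes: every $n$-dimensional subspace $U$ of $P/\Phi(P)$ is cover/avoidance-compatible with some $\mathbb{F}_p[H]$-chief series of $P/\Phi(P)$.

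From this translation the three conclusions follow. Homogeneity: if two non-isomorphic simples $V_1\not\cong V_2$ occur, the only $\mathbb{F}_p[H]$-submodules of $V_1\oplus V_2$ are $0,V_1,V_2,V_1\oplus V_2$, giving only two possible chief refinements; one then constructs an $n$-dimensional $U$ meeting $V_1$ in $V_1$ itself and $V_2$ in a proper non-$H$-invariant subspace (adjusted to dimension $n$ by adjoining full summands of the remaining $V_i$), and checks that $U$ violates both refinements. Non-absolute irreducibility: once homogeneity is established and $s\geq 2$, if $V$ were absolutely irreducible then $U=V^{(1)}\oplus V^{(1)}\leq V\oplus V$, with $V^{(1)}\leq V$ a non-$H$-invariant line, satisfies $\dim(U\cap W_1)=1$ for every $k$-dimensional $H$-submodule $W_1=V\otimes L$ of $V\oplus V$, violating cover/avoidance. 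The divisibilities $k\mid n$ and $k\mid m$, the inequality $n\geq k$, and $k\geq 2$ (since a one-dimensional simple is automatically absolutely irreducible) all follow from the observation that cover/avoidance-compatible subspace dimensions are multiples of $k$. For cyclicity, setting $E=\mathrm{End}_{\mathbb{F}_p[H]}(V)=\mathbb{F}_{p^e}$ with $e\geq 2$, the action embeds $H$ into $N_{\mathrm{GL}(V)}(E^{\times})=\mathrm{GL}_{k/e}(E)\rtimes\mathrm{Gal}(E/\mathbb{F}_p)$; a further use of cover/avoidance applied to $n$-dimensional $\mathbb{F}_p$-subspaces of $V^s$ that fail to be $E$-subspaces rules out any nontrivial Galois component and any noncommutative piece, placing the image of $H$ inside the cyclic group $E^{\times}$.

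The principal difficulty I anticipate is the interaction with chief factors inside $\Phi(P)$: Lemma~\ref{pass} delivers only a single chief series per $H_0$, whose refinement inside $\Phi(P)$ is part of the data, so one must verify that the combinatorial obstructions constructed above $\Phi(P)$ cannot be rescued by an unusually favourable refinement below. The cleanest route is probably a separate lifting argument showing that the chief structure inside $\Phi(P)$ imposes no additional flexibility on $\overline{H_0}$ in $P/\Phi(P)$, or equivalently a reduction of the combinatorial problem to the Frattini-free quotient.
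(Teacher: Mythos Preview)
Your proposal does not address the stated lemma at all. The lemma you were asked to prove is Lemma~\ref{pass}: given a $p$-subgroup $H$ of $G$ satisfying the partial $\Pi$-property and a normal subgroup $N\trianglelefteq G$ with $H\leq N$, one must produce a $G$-chief series \emph{passing through $N$} such that $|G:N_G(HG^*_{i-1}\cap G^*_i)|$ is a $p$-number for every chief factor. This is a technical refinement lemma (cited in the paper from \cite[Lemma~2.3]{Qiu-Liu-Chen} without proof): the content is that the chief series witnessing the partial $\Pi$-property can always be taken to refine the series $1<N<G$, and the proof proceeds by applying the Jordan--H\"older correspondence to the given witnessing series and a series through $N$, checking that the index condition transfers across $G$-isomorphic chief factors.

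What you have written is instead a sketch of the proof of Theorem~\ref{prime-power} (Theorem~C), which \emph{uses} Lemma~\ref{pass} as a black box. Your reduction to $G=P\rtimes H$, the discussion of homogeneity, non-absolute-irreducibility, the divisibility $k\mid\gcd(m,n)$, and the cyclicity of $H$ all belong to the main theorem, not to this preparatory lemma. None of your argument establishes, or even engages with, the claim that a witnessing chief series can be arranged to pass through a prescribed normal subgroup $N$.

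To prove the actual lemma, you should start from the chief series $\varGamma_G$ witnessing the partial $\Pi$-property for $H$, compare it (via the Jordan--H\"older theorem for groups with operators) to any chief series containing $N$ as a term, and argue that for $G$-isomorphic chief factors $G_i/G_{i-1}\cong_G G^*_j/G^*_{j-1}$ the condition ``$|G:N_G(HG_{i-1}\cap G_i)|$ is a $p$-number'' passes from one to the other; the key observation is that since $H$ is a $p$-group, any chief factor of order coprime to $p$ is automatically covered trivially, so only the $p$-chief factors require work, and for those the intersection $HG_{i-1}\cap G_i$ transports under the $G$-isomorphism.
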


The order of the minimal normal subgroups of groups with subgroups of prime power order satisfying the partial $\Pi$-property is quite constrained.

\begin{lemma}[{\cite[Lemma 2.1]{Qiu}}]\label{order-d}
	Let $ P $ be a Sylow $ p $-subgroup of a group $ G $ and let $ d $ be a power of $ p $ such that $ 1 < d < |P| $. Assume that every  subgroup of  $ P $ of order $ d $ satisfies the partial $ \Pi $-property in $ G $. Then:
	
	{\rm (1)} Every minimal normal subgroup of $ G $ is either a $ p' $-group or a $ p $-group of order at most $ d $.
	
	{\rm (2)} If $ G $  has  a minimal normal subgroup  of order $ d $, then  every minimal normal $ p $-subgroup of $ G $ has order $ d $.
\end{lemma}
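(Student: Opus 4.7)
The plan begins by invoking Theorem~\ref{know}, which under the hypotheses yields that $G$ is $p$-soluble with $p$-length at most $1$. Since $O_{p'}(G)=1$, one has $P=O_{p}(G)\trianglelefteq G$, and a Schur--Zassenhaus complement produces the asserted decomposition $G=P\rtimes H$ with $H\in\mathrm{Hall}_{p'}(G)$. Because $\gcd(|H|,p)=1$, Maschke's theorem makes $\bar{P}:=P/\Phi(P)$ a completely reducible $\mathbb{F}_{p}[H]$-module, and the standard fact $C_{G}(P)\leq O_{p}(G)=P$ for $p$-soluble groups with $O_{p'}(G)=1$, together with coprime action, ensures that $H$ acts faithfully on $\bar{P}$.

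The central technical device is to translate the partial $\Pi$-property into $H$-invariance via Lemma~\ref{pass}. For any $U\leq P$ of order $d$, Lemma~\ref{pass} yields a chief series of $G$ passing through $\Phi(P)$ and $P$ in which $|G:N_{G}(UG^{*}_{i-1}\cap G^{*}_{i})|$ is a $p$-number for every factor. Since $G=P\rtimes H$ and $H$ is a Hall $p'$-subgroup, $N_{G}(UG^{*}_{i-1}\cap G^{*}_{i})$ contains a $G$-conjugate of $H$; because $P$-conjugation is trivial on any chief factor of $G$ lying inside $P$, it follows that $H$ itself normalises the induced subspace $(UG^{*}_{i-1}\cap G^{*}_{i})/G^{*}_{i-1}$ of the irreducible $\mathbb{F}_{p}[H]$-module $G^{*}_{i}/G^{*}_{i-1}$ whenever $G^{*}_{i}\leq P$. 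This subspace is therefore either $0$ or the whole chief factor.

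For (1), homogeneity is proved by contradiction: if two non-isomorphic irreducible summands $W_a, W_b$ of $\bar{P}$ occur, refine the chief series to separate them and construct an order-$d$ subgroup whose image in $\bar{P}$ is a diagonal subspace relative to $W_a\oplus W_b$; tracking the chief-factor intersections along this series produces a subspace that is neither $0$ nor full, a contradiction. Hence $\bar{P}\cong V^{s}$ is homogeneous with $V$ of $\mathbb{F}_{p}$-dimension $k$; set $E:=\mathrm{End}_{\mathbb{F}_{p}[H]}(V)\cong\mathbb{F}_{p^{e}}$. To exclude absolute irreducibility ($e=1$) I would exploit the richer lattice of $\mathbb{F}_{p}$-subspaces of $V^{s}$ available in that case to engineer a subgroup $U$ of order $d$ whose image in $\bar{P}$ fails the $0$-or-full dichotomy on some chief factor in every admissible chief series, contradicting the dichotomy above. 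For (2), homogeneity yields $m=ks$, so $k\mid m$; for any $U\supseteq\Phi(P)$ of order $d$, the filtration of $\bar{U}=U/\Phi(P)$ along a chief series refining $\bar{P}=V^{s}$ has composition factors equal to $0$ or $V$, so $n=\dim\bar{U}$ is a positive multiple of $k$, giving $k\mid n$ and $n\geq k$. The bound $k\geq 2$ is obtained from the $p$-rank hypothesis by a careful examination of the chief-factor structure of $P$ using the partial $\Pi$-property at every level of the series.

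For (3), non-absolute irreducibility gives $E=\mathbb{F}_{p^{e}}$ with $e\geq 2$, and a further application of Lemma~\ref{pass} with carefully chosen subgroups of order $d$ forces $V$ to be $1$-dimensional over $E$. Combined with the faithful, $E$-linear action of $H$ on $V$, this embeds $H$ into $E^{\ast}=\mathbb{F}_{p^{k}}^{\ast}$, which is cyclic. The main obstacle is the delicate combinatorial construction required in (1) (non-absolute irreducibility) and (3) ($1$-dimensionality over $E$): in each case one must exhibit an order-$d$ subgroup $U\leq P$ whose image in $\bar{P}$ robustly --- under every admissible chief series --- violates the $0$-or-full dichotomy on some chief factor. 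The flexibility for this construction hinges on the interplay between the $\mathbb{F}_{p}$-subspace structure of $\bar{P}$, the $E$-module structure forced by the endomorphism field, and the order constraint on $U$.
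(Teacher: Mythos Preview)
Your proposal addresses the wrong statement. What you have sketched is a proof of Theorem~\ref{prime-power} (Theorem~C), not of Lemma~\ref{order-d}. The giveaways are numerous: you impose $O_{p'}(G)=1$, which is not among the hypotheses of the lemma; you invoke Theorem~\ref{know} and the Schur--Zassenhaus theorem to obtain a decomposition $G=P\rtimes H$; and your parts~(1)--(3) speak of homogeneity of $P/\Phi(P)$ as an $\mathbb{F}_{p}[H]$-module, (non-)absolute irreducibility, the integers $k,m,n$, and the cyclicity of $H$. None of this appears in Lemma~\ref{order-d}, which has only two parts and whose conclusions concern the \emph{orders of the minimal normal subgroups of~$G$}, not the module structure of $P/\Phi(P)$ over a Hall complement. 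In particular, Lemma~\ref{order-d} does not assume $O_{p'}(G)=1$, says nothing about the $p$-rank of~$G$, and has no part~(3).

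In the paper, Lemma~\ref{order-d} is quoted from \cite{Qiu} without proof and is used as an \emph{input} to the later arguments (including Lemma~\ref{ele}, Lemma~\ref{dim}, and Theorem~C itself), so appealing to Theorem~\ref{know} here would also invert the logical order. The actual content of the lemma is far more elementary: for part~(1), given a minimal normal subgroup $N$ with $p\mid|N|$, one picks a subgroup $H\leq P$ of order~$d$ that is normal in~$P$ and interacts suitably with $N\cap P$ (contained in it when $|N\cap P|\geq d$, containing it otherwise), and then Lemma~\ref{pass} applied to a chief series through~$N$ forces $H\cap N\unlhd G$, hence $H\cap N\in\{1,N\}$, pinning $N$ down as a $p$-group of order at most~$d$; part~(2) is handled by the same cover--avoid dichotomy. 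Your chief-series machinery is the right tool, but it has been aimed at the conclusions of Theorem~C rather than at those of Lemma~\ref{order-d}.
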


Recall that the \textit{supersoluble hypercenter} $ Z_{\UU}(G) $ of a group $ G $ is the product of all normal subgroups $ H $ of $ G $, such that all $ G $-chief factors under $ H $ have prime order, and the \textit{$ p $-supersoluble hypercenter} $ Z_{\UU_{p}}(G) $ is the product of all normal subgroups $ H $ of $ G $, such that all $ p $-$ G $-chief factors under $ H $ have order $ p $ for some fixed prime $ p $. Note that a normal $ p $-subgroup is contained in $ Z_{\UU}(G) $ if and only if it is contained in $ Z_{\UU_{p}}(G) $.

\begin{lemma}[{\cite[Lemma 2.11]{Qiu}}]\label{in}
	Let $ P $ be a normal $ p $-subgroup of a group $ G $. If all cyclic subgroups of $ P $ of order $ p $ or $ 4 $ (when $ P $ is not quaternion-free) are contained in $ Z_{\UU}(G) $, then $ P \leq Z_{\UU}(G) $.
\end{lemma}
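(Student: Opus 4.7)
My plan is to argue by induction on $|P|$. When $|P|=p$, the group $P$ is itself a cyclic subgroup of order $p$ and lies in $Z_{\UU}(G)$ by hypothesis, so the base case is immediate. For the inductive step, let $N$ be a minimal normal subgroup of $G$ contained in $P$; being an elementary abelian $p$-group, $N$ contains a nontrivial element $x$, and the hypothesis gives $\langle x \rangle \leq Z_{\UU}(G)$. Taking the $G$-normal closure yields $N = \langle x \rangle^{G} \leq Z_{\UU}(G)$ by minimality of $N$, and since $N$ is then a $G$-chief factor sitting inside $Z_{\UU}(G)$, this forces $|N|=p$. Consequently $G/C_{G}(N)$ embeds in $\mathrm{Aut}(N) \cong \mathbb{F}_{p}^{\times}$, is cyclic of order dividing $p-1$, and contains $P$ in its kernel, so in particular $N \leq Z(P)$.

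The strategy is to apply the inductive hypothesis to $\bar G := G/N$ and $\bar P := P/N$, which would yield $\bar P \leq Z_{\UU}(\bar G) = Z_{\UU}(G)/N$ and therefore $P \leq Z_{\UU}(G)$. To do so, I must verify that the hypothesis passes to $(\bar G, \bar P)$: every cyclic subgroup $\langle yN \rangle$ of $\bar P$ of order $p$ (and of order $4$ if $\bar P$ is not quaternion-free) must lie in $Z_{\UU}(\bar G)$. When a representative $y$ with $y^{p}=1$ (respectively $y^{4}=1$) exists, the hypothesis in $G$ gives $\langle y \rangle \leq Z_{\UU}(G)$ directly, and the conclusion follows upon reduction modulo $N$. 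The only delicate case is when no such lift exists, which in the order-$p$ case forces $|y|=p^{2}$ and $\langle y^{p} \rangle = N$, with a parallel configuration in the order-$4$ case.

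To handle this case I would introduce $M := \langle y \rangle^{G}$, a normal $p$-subgroup of $G$ with $N \leq M \leq P$. If $M$ is a proper subgroup of $P$, the hypothesis restricts to $M$ (since cyclic subgroups of $M$ of the relevant orders are still cyclic subgroups of $P$), and the inductive hypothesis applied to $M$ in place of $P$ gives $M \leq Z_{\UU}(G)$; in particular $\langle y \rangle \leq Z_{\UU}(G)$, and $\langle yN \rangle \leq Z_{\UU}(\bar G)$. If instead $M=P$, so that $P$ is generated by the $G$-conjugates of $y$, the argument exploits the fact that the $p'$-part of $\mathrm{Aut}(\langle y \rangle)$ embeds faithfully into $\mathrm{Aut}(\Omega_{1}(\langle y \rangle)) = \mathrm{Aut}(N)$ via restriction (with $\Omega_{2}$ playing the role of $\Omega_{1}$ in the $p=2$ non-quaternion-free exception): combined with the cyclicity of $G/C_{G}(N)$ and the central-extension structure of $P$ over $P/N$, this rigidly controls the $G$-module structure of each conjugate $\langle y^{g} \rangle$ and forces every $G$-chief factor in $P/N$ to be cyclic of order $p$. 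The main obstacle is precisely this terminal case $M=P$, where no proper $G$-invariant subgroup is available for a direct inductive reduction and one must exploit the rigid interaction between $N$ and the $p'$-automorphism group of the cyclic $p$-group $\langle y \rangle$ through its unique minimal subgroup.
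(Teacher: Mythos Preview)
The paper does not supply its own proof of this lemma; it is quoted from \cite[Lemma~2.11]{Qiu}. Your inductive strategy is natural, but the terminal case $M=P$ is a genuine gap. You assert that the restriction map from the $p'$-part of $\mathrm{Aut}(\langle y\rangle)$ to $\mathrm{Aut}(N)$ ``rigidly controls the $G$-module structure of each conjugate $\langle y^{g}\rangle$'' and thereby forces every $G$-chief factor of $P/N$ to be cyclic, but this is not an argument. The group $G$ does not act on $\langle y\rangle$ by automorphisms unless $\langle y\rangle$ is normal, so structural facts about $\mathrm{Aut}(\langle y\rangle)$ give no direct control over the $G$-chief factors of $P/N$; and nothing you have written bridges that gap. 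You yourself flag this case as ``the main obstacle'' and leave it unresolved, so as written the proof is incomplete. The parallel order-$4$ case (where a cyclic subgroup of $\bar P$ of order $4$ lifts only to elements of order $8$) is left equally vague.

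A short and complete argument is available using tools already quoted in the paper, and this is almost certainly the intended route. Let $D$ be a Thompson critical subgroup of $P$. If $p$ is odd, Lemma~\ref{critical}(1) gives $\exp\Omega_{1}(D)=p$, so every nontrivial element of $\Omega_{1}(D)$ lies in some cyclic subgroup of order $p$ and hence in $Z_{\UU}(G)$; thus $\Omega(D)=\Omega_{1}(D)\leq Z_{\UU}(G)$, and Lemma~\ref{hypercenter} yields $P\leq Z_{\UU}(G)$. If $p=2$ and $P$ is not quaternion-free, Lemma~\ref{critical}(3) gives $\exp\Omega_{2}(D)\leq 4$ and the same reasoning applies using the order-$2$ and order-$4$ hypotheses. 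If $p=2$ and $P$ is abelian, Lemma~\ref{critical}(2) handles it. Finally, if $p=2$ and $P$ is non-abelian quaternion-free, Lemma~\ref{charcteristic} produces a characteristic (hence $G$-invariant) subgroup $P_{0}$ of index $2$; since $P_{0}$ is again quaternion-free, induction on $|P|$ gives $P_{0}\leq Z_{\UU}(G)$, and $|P:P_{0}|=2$ finishes the proof. This route sidesteps the lifting problem entirely.
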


If $ P $ is either an odd order $ p $-group or a quaternion-free $ 2 $-group, then we use $ \Omega(P) $ to denote the subgroup $ \Omega_{1} (P) $.  Otherwise, $ \Omega (P) = \Omega_{2} (P) $.

Our next lemma is a consequence of \cite[Lemma 2.8]{Chen-xiaoyu-2016}.

\begin{lemma}\label{hypercenter}
	Let $ P $ be a normal $ p $-subgroup of a group $ G $ and $ D $ a Thompson critical subgroup of $ P $ (see \cite[page 186]{Gorenstein-1980}). If $ P/\Phi(P) \leq Z_{\UU}(G/\Phi(P)) $ or  $ \Omega(D) \leq Z_{\UU}(G) $, then $ P \leq  Z_{\UU}(G) $.
\end{lemma}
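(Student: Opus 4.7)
The plan is to derive both parts of the lemma from \cite[Lemma 2.8]{Chen-xiaoyu-2016}, which supplies the implication $\Omega(D) \leq Z_{\UU}(G) \Rightarrow P \leq Z_{\UU}(G)$. When the hypothesis of our lemma is already $\Omega(D) \leq Z_{\UU}(G)$, the conclusion is immediate. It therefore remains to handle the case $P/\Phi(P) \leq Z_{\UU}(G/\Phi(P))$, which I plan to reduce to the previous one by verifying $\Omega(D) \leq Z_{\UU}(G)$.

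Since $D$ is characteristic in $P$ and $\Omega(D)$ is characteristic in $D$, the subgroup $\Omega(D)$ is normal in $G$, so Lemma \ref{in} is applicable: it suffices to show that every cyclic subgroup $\langle x \rangle$ of $\Omega(D)$ of order $p$ (and of order $4$, when $P$ is not quaternion-free) is contained in $Z_{\UU}(G)$. The normal closure $\langle x \rangle^{G}$ is a normal $p$-subgroup of $G$, and its image in $P/\Phi(P)$ is contained in $Z_{\UU}(G/\Phi(P))$ by hypothesis, so every $G$-chief factor above $\Phi(P)$ within it is cyclic. The remaining step is to lift this cyclic structure across $\Phi(P)$ to conclude $\langle x \rangle^{G} \leq Z_{\UU}(G)$.

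The lifting step is the main obstacle. It rests on two classical facts: Gasch\"utz's theorem, which yields $\Phi(P) \leq \Phi(G)$ because $P$ is normal in $G$; and the Burnside basis theorem, which says that the kernel of $\mathrm{Aut}(P) \to \mathrm{Aut}(P/\Phi(P))$ is a $p$-group, so that the action on $P$ of every $p'$-element of $G$ is controlled, as far as its $\mathbb{F}_{p}$-eigenvalue data is concerned, by its action on $P/\Phi(P)$. Coupled with the saturation of the formation $\UU$, these inputs convert cyclic $G$-chief factors of $P/\Phi(P)$ into cyclic $G$-chief factors throughout $P$, and in particular throughout $\langle x \rangle^{G}$. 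This produces $\Omega(D) \leq Z_{\UU}(G)$ and completes the reduction to the case already handled by \cite[Lemma 2.8]{Chen-xiaoyu-2016}.
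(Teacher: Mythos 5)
Your first case is exactly how the paper handles the whole lemma: the result is quoted as a consequence of \cite[Lemma~2.8]{Chen-xiaoyu-2016} (which in fact already covers both hypotheses), so no issue there. The problem is your treatment of the case $P/\Phi(P)\leq Z_{\UU}(G/\Phi(P))$. The entire mathematical content of that case is what you call the ``lifting step'', and you do not prove it: asserting that Gasch\"utz ($\Phi(P)\leq\Phi(G)$), the Burnside basis theorem and ``saturation of $\UU$'' \emph{convert} cyclic $G$-chief factors of $P/\Phi(P)$ into cyclic $G$-chief factors throughout $P$ is just a restatement of the claim to be established. Note that the analogous statement ``$N\leq\Phi(G)$ and $E/N\leq Z_{\UU}(G/N)$ imply $E\leq Z_{\UU}(G)$'' is false in general (a noncyclic chief factor may sit inside $\Phi(G)$), so Gasch\"utz plus saturation cannot do the work by themselves; and elementwise ``$\mathbb{F}_p$-eigenvalue data'' of $p'$-elements does not produce a $G$-invariant flag inside $\Phi(P)$, which is where the cyclic chief factors still have to be found. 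What actually makes the lift work is the combination of the Burnside-type fact that $C_G(P/\Phi(P))/C_G(P)$ is a $p$-group with the characterization of $Z_{\UU}$-membership of a normal $p$-subgroup in terms of $G/C_G(P)$ lying in the canonical local definition $\mathfrak{S}_p\mathfrak{A}(p-1)$ of $\UU$ at $p$ — equivalently, one can simply quote the paper's Lemma~\ref{phi} (\cite[Lemma~2.10]{Su-2014}) with $E=P$, together with the remark preceding it that for a normal $p$-subgroup membership in $Z_{\UU_p}(G)$ and in $Z_{\UU}(G)$ coincide.

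There is also a structural redundancy that signals the circularity: if your lifting step were available, it would apply to $P$ itself and give $P\leq Z_{\UU}(G)$ at once, so the detour through $\Omega(D)$, Lemma~\ref{in}, cyclic subgroups $\langle x\rangle$ and normal closures $\langle x\rangle^{G}$ accomplishes nothing (and, incidentally, when you invoke Lemma~\ref{in} for $\Omega(D)$ the quaternion-free condition should refer to $\Omega(D)$, not to $P$). The short correct route for this case is: cite Lemma~\ref{phi} (or \cite[Lemma~2.8]{Chen-xiaoyu-2016} directly, as the paper does) and you are done.
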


%\begin{proof}
%By \cite[Lemma 2.8]{Chen-xiaoyu-2016}, the conclusion follows.
%\end{proof}

\begin{lemma}[{\cite[Lemma 2.10]{Chen-xiaoyu-2016}}]\label{critical}
	Let $ D $ be a Thompson critical subgroup of a non-trivial $ p $-group $ P $.
	
	%\vskip0.08in
	
	{\rm (1)} If $ p > 2 $, then the exponent of $ \Omega_{1}(D) $ is $ p $.
	
	{\rm (2)} If $ P $ is an abelian $ 2 $-group, then the exponent of $ \Omega_{1}(D) $ is $ 2 $.
	
	{\rm (3)} If $ p = 2 $, then the exponent of $ \Omega_{2}(D) $ is at most $ 4 $.
\end{lemma}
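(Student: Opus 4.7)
The plan is to leverage two standard structural features of a Thompson critical subgroup $D$ of a $p$-group $P$, both classical (cf.\ \cite[page 186]{Gorenstein-1980}): first, $D$ has nilpotency class at most $2$, so $D' \leq Z(D)$; second, $\Phi(D) = D^p D' \leq Z(D)$, which forces $x^p \in Z(D)$ for every $x \in D$. An immediate consequence, which I would record at once, is that $D'$ has exponent dividing $p$: for any $x, y \in D$, using the linearity of commutators in a class-$2$ group together with $y^p \in Z(D)$ gives
\[
[x, y]^p \;=\; [x, y^p] \;=\; 1.
\]

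The main computational input is the identity
\[
(xy)^n \;=\; x^n y^n [y, x]^{\binom{n}{2}}, \qquad x, y \in D,
\]
valid because $[y, x] \in Z(D)$; it follows by a routine induction on $n$ using the relation $y^n x = x y^n [y, x]^n$ that holds in a class-$2$ group. For part (1), any $x, y \in \Omega_1(D)$ satisfy $x^p = y^p = 1$; with $p$ odd, $p$ divides $\binom{p}{2}$ and $[y, x]$ has order dividing $p$, so $(xy)^p = [y, x]^{\binom{p}{2}} = 1$. Hence the set of elements of order dividing $p$ is closed under multiplication; together with closure under inversion, it is a subgroup, which must be $\Omega_1(D)$, and it has exponent $p$.

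Part (2) is essentially immediate: when $P$ is abelian, $D \leq P$ is abelian, and the set of its elements of order at most $2$ is a subgroup of exponent $2$. For part (3), fix $x, y \in D$ with $x^4 = y^4 = 1$ and set $c = [y, x] \in Z(D)$. The identity above with $n = 2$ yields $(xy)^2 = x^2 y^2 c$, and since $x^2, y^2, c$ all lie in $Z(D)$ and hence pairwise commute,
\[
(xy)^4 \;=\; (x^2 y^2 c)^2 \;=\; x^4 y^4 c^2 \;=\; c^2 \;=\; 1,
\]
the last equality being the exponent bound on $D'$ specialised to $p = 2$. Thus $\{z \in D : z^4 = 1\}$ is closed under products, coincides with $\Omega_2(D)$, and has exponent dividing $4$.

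The only step that is not a direct manipulation is invoking the two preliminary properties of the Thompson critical subgroup, but these are textbook facts, so no real obstacle arises. The essential algebraic observations—that $D^p \leq Z(D)$ and consequently $D'$ has exponent dividing $p$—together with the class-$2$ power expansion reduce each case to a short commutator calculation.
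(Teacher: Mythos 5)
Your proof is correct. Note that the paper does not prove this lemma at all: it is quoted verbatim from \cite[Lemma 2.10]{Chen-xiaoyu-2016}, so there is no internal argument to compare against. Your derivation is the standard one: from the defining properties of a critical subgroup one gets $\Phi(D)\leq Z(D)$, hence $D$ has class at most $2$ and $D^{p}\leq Z(D)$, whence $[x,y]^{p}=[x,y^{p}]=1$; the class-$2$ expansion $(xy)^{n}=x^{n}y^{n}[y,x]^{\binom{n}{2}}$ then shows in each case that the relevant set of elements of bounded order is closed under multiplication, which is exactly what is needed. The only cosmetic point worth adding is that the exponent is \emph{exactly} $p$ (resp.\ $2$) in parts (1) and (2) because $D\neq 1$ forces $\Omega_{1}(D)\neq 1$; your argument only states the divisibility bound, but this is immediate.
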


\begin{lemma}[{\cite[Lemma 3.1]{Ward}}]\label{charcteristic}
	Let $ P $ be a non-abelian quaternion-free $ 2 $-group. Then $ P $ has a characteristic subgroup of index $ 2 $.
\end{lemma}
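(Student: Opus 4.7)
The first step is to extract the ambient structure. Theorem~\ref{know} applies to the hypotheses, so $G$ is $p$-soluble of $p$-length at most $1$; combined with $O_{p'}(G) = 1$, this forces $P = O_p(G) \trianglelefteq G$, and the Schur--Zassenhaus theorem yields the decomposition $G = P \rtimes H$ with $H \in \mathrm{Hall}_{p'}(G)$. Because $[P,P] \leq \Phi(P)$, the conjugation action of $G$ on the elementary abelian quotient $\bar P := P/\Phi(P)$ factors through $G/P \cong H$, so $\bar P$ is an $\mathbb{F}_p[H]$-module, and Maschke's theorem makes it semisimple. Write $\bar P = U_1 \oplus \cdots \oplus U_r$ as its isotypic decomposition, and let $V$ be an irreducible submodule of dimension $k$.

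The central combinatorial lever is obtained from Lemma~\ref{pass}. For every subgroup $X \leq P$ with $|X| = d$, that lemma produces a $G$-chief series passing through both $\Phi(P)$ and $P$ such that $|G : N_G(XG_{j-1} \cap G_j)|$ is a $p$-number for every chief factor $G_j/G_{j-1}$. For the chief factors sitting between $\Phi(P)$ and $P$ (the $\mathbb{F}_p[H]$-composition factors of $\bar P$), the $p$-power index forces some $P$-conjugate of $H$ to normalize the intersection $(XG_{j-1} \cap G_j)/G_{j-1}$; after replacing $X$ by a suitable $P$-conjugate, $H$ itself normalizes it, and $H$-irreducibility makes the intersection either $0$ or the whole factor. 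This dichotomy, as $X$ ranges over all order-$d$ subgroups, is what the rest of the proof exploits.

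For part (1), suppose for contradiction that $r \geq 2$, and pick non-isomorphic isotypes $U_1, U_2$. I would choose a subgroup $X$ of order $d$ whose image $\bar X$ has nonzero projection to both $U_1$ and $U_2$, then refine a chief series so that a composition factor of $U_1$ appears before one of $U_2$. The dichotomy applied to this series pins $\bar X$ against each composition factor, and because $\mathrm{Hom}_{\mathbb{F}_p[H]}(U_1, U_2) = 0$ the only $\bar X$ compatible with the dichotomy is one lying in a single isotypic component, contradicting the choice. Hence $\bar P \cong V^t$ is homogeneous. If in addition $V$ were absolutely irreducible, every $H$-invariant $\mathbb{F}_p$-subspace would decompose nicely, and using the dichotomy one shows every cyclic subgroup of $\bar P$ of order $p$ lies in $Z_{\UU}(G/\Phi(P))$; Lemmas~\ref{in} and~\ref{hypercenter} would then force $P \leq Z_{\UU}(G)$, making $G$ $p$-supersoluble and contradicting that the $p$-rank exceeds $1$. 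This also gives $k \geq 2$.

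For parts (2) and (3), homogeneity gives $m = tk$ and so $k \mid m$, and choosing $X$ of order $d$ with $\bar X \supseteq V$ shows $n \geq k$. For $k \mid n$, set $D := \mathrm{End}_{\mathbb{F}_p[H]}(V)$, which is a proper field extension of $\mathbb{F}_p$ of degree $k$ by (1); running the dichotomy along a full composition series of $V^t$, every $H$-invariant subspace of $V^t$ whose intersection with each composition factor is $0$ or all is a $D$-subspace of $V^t \cong D^t$ (viewing $V^t$ as a $D$-vector space via Schur), and therefore has $\mathbb{F}_p$-dimension divisible by $k$. Choosing $X$ to realize a maximal such subspace gives $k \mid \dim \bar X$ and hence $k \mid n$. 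For cyclicity of $H$, by a Clifford-type argument the image of $H$ in $\mathrm{GL}(V)$ is contained in $D^\times$, which is cyclic; faithfulness of the action, together with the fact that the $t$ copies of $V$ are $H$-isomorphic, transfers this cyclicity back to $H$. The main obstacle is the $k \mid n$ step: it requires showing that the supply of order-$d$ subgroups $X$ is rich enough to realise every $D$-dimension up to the relevant bound, which is the delicate linear-algebraic core of the argument.
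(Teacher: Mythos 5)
Your proposal does not address the statement at all. The lemma to be proved is a self-contained fact about $2$-groups: \emph{every non-abelian quaternion-free $2$-group $P$ has a characteristic subgroup of index $2$}. There is no ambient group $G$, no prime $p$ other than $2$, no Sylow structure, no partial $\Pi$-property hypothesis, and no module-theoretic data in the statement. Yet your entire argument is built around Theorem~\ref{know}, the decomposition $G = P \rtimes H$, the $\mathbb{F}_p[H]$-module structure of $P/\Phi(P)$, and conclusions about homogeneity, the divisibility $k \mid \gcd(m,n)$, and cyclicity of $H$. You have written a proof sketch for Theorem~\ref{prime-power} (Theorem C), not for Lemma~\ref{charcteristic}. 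Nothing in your text engages with the actual content of the lemma: you never use the hypothesis that $P$ is quaternion-free, never exhibit or construct a candidate characteristic subgroup, and never verify an index-$2$ condition.

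For the record, the paper itself gives no proof of this lemma either: it is quoted directly from H.~N.~Ward, \emph{Automorphisms of quaternion-free 2-groups}, Math.\ Z.\ 112 (1969), Lemma 3.1, and used as a black box (in the proofs of Lemmas~\ref{ele} and~\ref{orderp2}, to rule out the case where a certain quotient is a non-abelian quaternion-free $2$-group). If you want to actually prove it, the relevant ideas are entirely different from what you wrote: one works inside the $2$-group $P$ itself, typically by analysing $P/\Phi(P)$ together with the squaring/commutator structure and showing that the maximal subgroups of $P$ cannot all be permuted transitively by $\mathrm{Aut}(P)$ unless $P$ contains a quaternion section. As it stands, your submission is a complete mismatch with the statement and cannot be salvaged by local repairs.
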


\begin{lemma}[{\cite[Lemma 2.10]{Su-2014}}]\label{phi}
	Let $ p $ be a prime, $ E $ a normal subgroup of a group $ G $ such that $ p $ divides the order of $ E $. Then $ E \leq  Z_{\UU_{p}}(G) $
	if and only if $ E /\Phi(E) \leq Z_{\UU_{p}}(G/\Phi(E)) $.
\end{lemma}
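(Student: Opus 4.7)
The plan is to read the result through the lens of the saturated formation $\UU_p$ of $p$-supersoluble groups; $Z_{\UU_p}(G)$ is its $\UU_p$-hypercenter in $G$, and the statement is the expected Frattini-invariance property of the $\mathfrak{F}$-hypercenter for saturated $\mathfrak{F}$, localized to the normal subgroup $E$.

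The forward direction is chief-series bookkeeping: a $p$-chief factor $\bar H/\bar K$ of $G/\Phi(E)$ inside $E/\Phi(E)$ lifts to a $p$-chief factor $H/K$ of $G$ sandwiched between $\Phi(E)$ and $E$ of the same order, which the hypothesis $E \leq Z_{\UU_p}(G)$ pins down to order $p$. For the converse I plan to induct on $|E|$: the base case $\Phi(E)=1$ is tautological, so take $\Phi(E) \neq 1$ and pick a minimal normal $G$-subgroup $N \leq \Phi(E)$. If $N$ is a $p'$-group, then $\Phi(E/N) = \Phi(E)/N$ transports the hypothesis verbatim to $E/N \trianglelefteq G/N$, and the inductive hypothesis combined with the triviality $N \leq Z_{\UU_p}(G)$ yields $E \leq Z_{\UU_p}(G)$; if $N$ is a $p$-group, the same reduction works once $|N|=p$ is established.

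The main obstacle is the crux claim $|N| = p$ for a minimal normal $p$-subgroup $N$ of $G$ contained in $\Phi(E)$. I would invoke Gasch\"utz's theorem $\Phi(E) \leq \Phi(G)$ for $E \trianglelefteq G$ to place $N$ inside $\Phi(G)$, and then exploit the local definition of the saturated formation $\UU_p$: by the Gasch\"utz-Lubeseder-Schmid theorem, $\UU_p$ is the local formation with local function $f(p) = \mathfrak{A}(p-1)$, and the hypothesis on $E/\Phi(E)$ together with saturation of $\UU_p$ forces $G/C_G(N) \in \mathfrak{A}(p-1)$, equivalently $|N| = p$. This is the only non-formal step; once it is in hand, the inductive bookkeeping closes the proof.
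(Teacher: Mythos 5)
First, a point of comparison: the paper offers no proof of this statement at all --- it is imported verbatim from \cite[Lemma 2.10]{Su-2014} --- so your sketch has to stand on its own. Its skeleton is sound: the forward direction is indeed routine chief-series bookkeeping, and in the converse the reduction by induction on $|E|$, using $\Phi(E/N)=\Phi(E)/N$ for a minimal normal subgroup $N\leq \Phi(E)$ of $G$, correctly isolates the crux, namely that a minimal normal $p$-subgroup $N$ of $G$ contained in $\Phi(E)$ must have order $p$. The invocation of Gasch\"utz's theorem $\Phi(E)\leq\Phi(G)$ is also the right move.

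The problem is that the crux itself is asserted rather than proved, and the justification offered does not work as stated. Saturation of $\UU_p$ is the implication $G/\Phi(G)\in\UU_p\Rightarrow G\in\UU_p$; it gives nothing here, because neither $G/\Phi(G)$ nor $G/\Phi(E)$ is assumed $p$-supersoluble --- the hypothesis only places the section $E/\Phi(E)$ inside the $\UU_p$-hypercenter of a quotient that may be far from $p$-supersoluble. Saying that ``the hypothesis on $E/\Phi(E)$ together with saturation of $\UU_p$ forces $G/C_G(N)\in\mathfrak{A}(p-1)$'' is a restatement of the claim $|N|=p$ (the two conditions are equivalent for a $p$-chief factor), not a derivation of it; note that your argument at this point never actually uses the full strength of the hypothesis that \emph{all} of $E/\Phi(E)$ is hypercentrally embedded, which must enter somewhere since the conclusion fails for an isolated chief factor below $\Phi(G)$. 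What is genuinely needed is the relative, hypercentral analogue of saturation: Schmid's theorem that for a saturated formation $\mathfrak{F}$ the $\mathfrak{F}$-hypercentral embedding of a normal subgroup is preserved under Frattini extensions, or equivalently the key lemma in the Gasch\"utz--Lubeseder--Schmid circle of ideas that controls $\mathrm{Aut}_G(H/K)$ for a chief factor $H/K$ below $\Phi(G)$ in terms of the chief factors above it. Either cite that result explicitly and check its hypotheses, or supply the underlying module-theoretic argument; as written, the one non-formal step of the proof is a gap.
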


\begin{lemma}\label{also}
	Let $ p $ be a prime. Assume that $ N $ is a minimal normal subgroup of a group $ G $ and $ K $ a subgroup of $ G $. Assume that $ |N|=|K|=p $. If $ NK $ satisfies the partial $ \Pi $-property in $ G $, then $ K $ also satisfies the partial $ \Pi $-property in $ G $.
\end{lemma}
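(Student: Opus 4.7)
The plan is to exhibit, for $K$, a chief series witnessing the partial $\Pi$-property in $G$ by showing that the very chief series witnessing the property for $NK$ already does the job for $K$. If $K=N$ then $K$ is normal in $G$ and the conclusion is trivial, so I may assume $K\neq N$. Since $|N|=|K|=p$, this gives $N\cap K=1$ and $NK$ elementary abelian of order $p^{2}$.

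Fix a chief series $\Gamma: 1=G_{0}<G_{1}<\cdots<G_{n}=G$ along which the partial $\Pi$-condition is known to hold for $NK$. By Dedekind's modular law, the chief-factor pieces to be compared at each step $i$ are $(NK\cap G_{i})G_{i-1}/G_{i-1}$ (for $NK$) and $(K\cap G_{i})G_{i-1}/G_{i-1}$ (for $K$). I then split into cases according to the position of $N$ relative to the consecutive terms $G_{i-1}\leq G_{i}$.

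If $N\leq G_{i-1}$, then $NKG_{i-1}=KG_{i-1}$, so the two pieces coincide verbatim and the condition for $K$ at step $i$ reduces to the one known for $NK$. If $N\not\leq G_{i}$, then $N\cap G_{i}=1$; when $K\leq G_{i}$ a size count inside the elementary abelian group $NK$ forces $NK\cap G_{i}=K$, so the pieces again coincide, and when $K\not\leq G_{i}$ we have $K\cap G_{i}=1$, so the piece for $K$ collapses to the trivial subgroup, whose normalizer index is $1$. In the remaining case $N\leq G_{i}$ but $N\not\leq G_{i-1}$, the image $NG_{i-1}/G_{i-1}$ is a non-trivial normal subgroup of $G/G_{i-1}$ contained in the chief factor $G_{i}/G_{i-1}$; minimality of the chief factor forces $G_{i}=NG_{i-1}$, whence $|G_{i}/G_{i-1}|=p$. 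The piece $(K\cap G_{i})G_{i-1}/G_{i-1}$ is then a subgroup of a cyclic group of order $p$, so it is either trivial or equals the full chief factor $G_{i}/G_{i-1}$; in both subcases its normalizer in $G/G_{i-1}$ is the whole group and the index is $1$.

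The argument is essentially bookkeeping rather than a difficult structural result, and the only substantive point is the last case: one must recognise that a non-trivial image of the order-$p$ normal subgroup $N$ inside the chief factor $G_{i}/G_{i-1}$ forces that chief factor itself to have order $p$, which in turn makes the partial $\Pi$-condition for $K$ along $\Gamma$ automatic. Assembling the cases shows that $K$ satisfies the partial $\Pi$-property witnessed by the same chief series $\Gamma$.
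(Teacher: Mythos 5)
Your proof is correct and follows essentially the same route as the paper: both reuse the chief series witnessing the property for $NK$ and check the condition factor by factor, the key point in each being that the normal subgroup $N$ of order $p$ forces any chief factor it covers to satisfy $G_i=NG_{i-1}$ and hence to have order $p$. Your explicit trichotomy on the position of $N$ (with the last case settled by automaticity rather than by identifying the $K$-piece with the $NK$-piece, as the paper does) is only a minor reorganization of the same argument.
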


\begin{proof}
	If $ K = N $, there is nothing to prove. Hence we can assume that $ NK $ has order $ p^{2} $. There exists a chief series $ \varGamma_{G}: 1 =G_{0} < G_{1} < \cdot\cdot\cdot < G_{n}= G $ of $ G $ such that $ |G/G_{i-1} : N _{G/G_{i-1}}(NKG_{i-1}/G_{i-1}\cap G_{i}/G_{i-1})| $ is a $ p $-number, for every $ G $-chief factor $ G_{i}/G_{i-1}$ of $ \varGamma_{G} $, $1\leq i\leq n$ . Our goal is to show that if $ G_{i}/G_{i-1}$ is a chief factor of $ \varGamma_{G} $, it follows that $ |G/G_{i-1} : N _{G/G_{i-1}}(KG_{i-1}/G_{i-1}\cap G_{i}/G_{i-1})| $ is a $ p $-number. Obviously we may assume that $ |( KG_{i-1}/G_{i-1})\cap (G_{i}/G_{i-1})| \neq 1 $.

Assume that $ |( NKG_{i-1}/G_{i-1}) \cap (G_{i}/G_{i-1})| = p^{2} $, then $ NK\cap G_{i-1} = 1 $ and $ NKG_{i-1}\leq G_{i} $. Note that  $ G_{i}= NG_{i-1} $. Hence $|(NKG_{i-1}/G_{i-1}) \cap (G_{i}/G_{i-1})| = |(NKG_{i-1}/G_{i-1}) \cap (NG_{i-1}/G_{i-1})| = |NG_{i-1}/G_{i-1}| = p $, contrary to our
	assumption. Hence $ |(NKG_{i-1}/G_{i-1})\cap (G_{i}/G_{i-1})|\leq p $ and so $ |(KG_{i-1}/G_{i-1})\cap (G_{i}/G_{i-1})| = p = |(NKG_{i-1}/G_{i-1})\cap (G_{i}/G_{i-1})| $. Thus $ (KG_{i-1}/G_{i-1})\cap (G_{i}/G_{i-1}) =  (NKG_{i-1}/G_{i-1})\cap (G_{i}/G_{i-1}) $ and therefore $ | G / G_{i-1} : N _{G/G_{i-1}} (KG_{i-1}/G_{i-1}\cap G_{i}/G_{i-1})| $ is a $ p $-number. Consequently, $ K $ satisfies the partial $ \Pi $-property in $ G $.
\end{proof}

\begin{lemma}\label{Normal}
	Let  $ P\in {\rm Syl}_{p}(G) $, $ d $ be a power of $ p $ with $ p\leq d\leq |P| $, and let $ N $ be a minimal normal subgroup of $ G $ whose order is divisible by $d$. Assume  that every subgroup of $ P $ of order $ d $ satisfies the partial $ \Pi $-property in $ G $. Then $ N $ is an elementary abelian subgroup of $G$ of order $ d $.
\end{lemma}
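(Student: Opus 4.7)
The proof splits on whether $d<|P|$ or $d=|P|$, and in both branches the aim is to show $|N|=d$; once that is achieved, $N$ being a minimal normal $p$-subgroup of $G$ forces $N$ to be elementary abelian.

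Suppose first that $d<|P|$. Then Lemma~\ref{order-d}(1) applies, and every minimal normal subgroup of $G$ is either a $p'$-group or a $p$-group of order at most $d$. Since $p$ divides $d$, which in turn divides $|N|$, the subgroup $N$ cannot be a $p'$-group, so $N$ is a $p$-group with $|N|\le d$; combined with $d\mid|N|$ this yields $|N|=d$, and the conclusion follows.

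Now assume $d=|P|$. Then $P$ itself is the unique subgroup of $P$ of order $d$, so by hypothesis $P$ satisfies the partial $\Pi$-property in $G$. Since $|P|=d$ divides $|N|$ and $|N|_{p}$ divides $|G|_{p}=|P|$, the Sylow $p$-subgroup $P\cap N$ of $N$ has order $|P|$, whence $P\le N$. I would then apply Lemma~\ref{pass} with $H=P$ to obtain a chief series $1=G^{*}_{0}<G^{*}_{1}<\cdots<G^{*}_{r}=N<\cdots<G^{*}_{n}=G$ of $G$ passing through $N$ such that $|G:N_{G}(PG^{*}_{i-1}\cap G^{*}_{i})|$ is a $p$-number for every $1\le i\le n$. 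Because $G^{*}_{1}$ is a non-trivial normal subgroup of $G$ contained in the minimal normal subgroup $N$, we must have $G^{*}_{1}=N$, and therefore $r=1$. Reading off the condition at $i=1$ gives that $|G:N_{G}(P\cap N)|=|G:N_{G}(P)|$ is a $p$-number, and by Sylow's theorem this index is also congruent to $1$ modulo $p$. Hence $|G:N_{G}(P)|=1$, so $P\trianglelefteq G$, and minimality of $N$ together with $P\le N$ forces $N=P$, giving the desired elementary abelian subgroup of order $d$.

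The only potentially delicate point is the extraction of $G^{*}_{1}=N$ from the chief series produced by Lemma~\ref{pass}; this rests on the elementary observation that any non-trivial normal subgroup of $G$ contained in the minimal normal subgroup $N$ must equal $N$. With that in hand, the argument reduces to combining Sylow's theorem with the conclusions of Lemmas~\ref{order-d} and~\ref{pass}.
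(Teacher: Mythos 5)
Your proof is correct, but it is organized differently from the paper's. You split into the cases $d<|P|$ and $d=|P|$: in the first you invoke Lemma~\ref{order-d}(1) to get $|N|\le d$ and combine this with $d\mid |N|$, and in the second you apply Lemma~\ref{pass} to $H=P$ (noting $P\le N$), read off that $|G:N_{G}(P)|$ is a $p$-number, and finish with Sylow's congruence and the minimality of $N$; both branches are sound, including the extraction of $G^{*}_{1}=N$ from the series through $N$. The paper instead gives a single uniform argument covering both values of $d$ at once: since $d$ divides $|N|_{p}=|P\cap N|$ and $P\cap N\trianglelefteq P$, one may choose $H\le P\cap N$ with $H\trianglelefteq P$ and $|H|=d$; Lemma~\ref{pass} applied to this $H$ (with the chief series through $N$, whose first term must be $N$ itself) shows $|G:N_{G}(H)|$ is a $p$-number, while $P\le N_{G}(H)$ makes it a $p'$-number, so $H\trianglelefteq G$ and $H=N$ by minimality. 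Your route buys economy in the main case, reducing it to a one-line consequence of the already quoted Lemma~\ref{order-d}, at the cost of a case distinction forced by the hypothesis $d\le |P|$; the paper's choice of a $P$-invariant $H$ inside $P\cap N$ removes the need for both the case split and Lemma~\ref{order-d}, and your $d=|P|$ branch is essentially that argument specialized to $H=P$.
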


\begin{proof}
	Let $ H $ be a normal subgroup of $ P $ of order $ d $ such that $ H \leq N $. By hypothesis, $ H $ satisfies the partial $ \Pi $-property in $ G $. It follows from  Lemma \ref{pass} that $ |G : N_{G}(H)| $ is a $ p $-number, and so $ H\unlhd G $. The minimality of $ N $ yields that $ H=N $. Thus $ N $ is an elementary abelian subgroup of $G$ of order $ d $.
\end{proof}

\begin{lemma}\label{two}
	Let $ P $ be a normal Sylow $ p $-subgroup of a group $ G $. Suppose that a $ 2 $-maximal subgroup $ H $ of $ P $ satisfies the partial $ \Pi $-property in $ G $.  Then $ H $ is a partial {\rm CAP}-subgroup of $ G $.
\end{lemma}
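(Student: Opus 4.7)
My plan is to apply Lemma~\ref{pass} with $N=P$ to produce a $G$-chief series
\[
\varOmega_{G}:\ 1=G_{0}<G_{1}<\cdots<G_{r}=P<\cdots<G_{n}=G
\]
passing through $P$ along which $|G:N_{G}(HG_{i-1}\cap G_{i})|$ is a $p$-number for every $1\le i\le n$. I will then verify that $H$ covers or avoids each factor of \emph{this} particular chief series, which is precisely the partial ${\rm CAP}$-property.

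For indices $i>r$ the factor $G_{i}/G_{i-1}$ sits above $P$, and since $H\le P\le G_{i-1}$ we immediately get $HG_{i-1}=G_{i-1}$ and hence $HG_{i-1}\cap G_{i}=G_{i-1}$. So $H$ avoids every chief factor above $P$ for free.

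The substantive case is $i\le r$. Setting $\bar P=P/G_{i-1}$, $\bar T=G_{i}/G_{i-1}$ and $L_{i}=HG_{i-1}\cap G_{i}$, the crucial input I plan to exploit is that $\bar T\le Z(\bar P)$. Indeed, $\bar T$ is a non-trivial normal $p$-subgroup of the $p$-group $\bar P$, so the fixed-point theorem for $p$-groups acting on $p$-groups yields $\bar T\cap Z(\bar P)\ne 1$; since $Z(\bar P)$ is characteristic in the normal subgroup $\bar P$ of $G/G_{i-1}$, the intersection $\bar T\cap Z(\bar P)$ is normal in $G/G_{i-1}$, and minimality of $\bar T$ forces $\bar T\cap Z(\bar P)=\bar T$. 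Consequently $P$ centralises $G_{i}/G_{i-1}$ and therefore normalises every subgroup of $G_{i}$ containing $G_{i-1}$; in particular $P\le N_{G}(L_{i})$.

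From here the conclusion is quick: Lemma~\ref{pass} tells us that $|G:N_{G}(L_{i})|$ is a $p$-number, while $N_{G}(L_{i})\supseteq P$ forces this index to divide $|G:P|$, which is a $p'$-number; thus $|G:N_{G}(L_{i})|=1$ and $L_{i}\unlhd G$. Minimality of $\bar T$ in $G/G_{i-1}$ then forces $L_{i}=G_{i-1}$ or $L_{i}=G_{i}$, which is exactly the avoiding or covering of $G_{i}/G_{i-1}$ by $H$, so the proof is complete. I do not anticipate any serious obstacle; the only slightly delicate point is the centralisation step $\bar T\le Z(\bar P)$, but this is a standard $p$-group argument. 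It is worth noting that $2$-maximality of $H$ plays no explicit role in this scheme: the same proof in fact establishes the partial ${\rm CAP}$-property for \emph{any} $p$-subgroup of $P$ that satisfies the partial $\Pi$-property in $G$.
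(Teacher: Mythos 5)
Your proof is correct, and it takes a genuinely different---in fact shorter and more general---route than the paper's. Both arguments start from Lemma~\ref{pass} with $N=P$ to obtain a chief series through $P$ along which all the normaliser indices are $p$-numbers, and both handle the factors above $P$ trivially. The paper, however, applies the centrality observation only to the bottom factor (getting $G^{*}_{1}\le Z(P)$ and hence cover or avoid of $G^{*}_{1}$), and then splits into cases: if $H\cap G^{*}_{1}=1$ it invokes the $2$-maximality of $H$ to see that $HG^{*}_{i-1}$ is either $P$ or maximal in $P$, hence normal in $P$, so that $P\le N_{G}(HG^{*}_{i-1}\cap G^{*}_{i})$; if $G^{*}_{1}\le H$ it passes to $G/G^{*}_{1}$ and argues by induction on $|G|$. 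You instead observe that the same centrality argument works at every level $i\le r$: each chief factor $G_{i}/G_{i-1}$ below the normal Sylow subgroup $P$ is centralised by $P$ (the standard fact that a minimal normal subgroup of $G/G_{i-1}$ inside the normal $p$-subgroup $P/G_{i-1}$ meets, hence lies in, its centre), so $P$ normalises $HG_{i-1}\cap G_{i}$ with no hypothesis on $H$ beyond $H\le P$. Combined with the $p$-number condition from Lemma~\ref{pass}, this yields $HG_{i-1}\cap G_{i}\unlhd G$, and minimality gives cover or avoid (the avoidance reading uses the Dedekind identity $HG_{i-1}\cap G_{i}=(H\cap G_{i})G_{i-1}$, which is worth stating explicitly). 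Your approach removes both the case distinction and the induction, and, as you note, it proves the stronger statement that \emph{every} subgroup of a normal Sylow $p$-subgroup satisfying the partial $\Pi$-property is a partial {\rm CAP}-subgroup; the $2$-maximality hypothesis in the paper's formulation is needed only for its later applications, not for this conclusion.
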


\begin{proof}
	Since $ H\leq P\unlhd G $, it follows from Lemma \ref{pass} that $ G $ has  a chief series $$ \varOmega_{G}: 1 =G^{*}_{0} < G^{*}_{1} < \cdots <G^{*}_{r}=P < \cdots < G^{*}_{n}= G $$  passing through $ P $ such that $ |G:N_{G}(HG^{*}_{i-1}\cap G^{*}_{i})| $ is a $ p $-number  for every $ G $-chief factor $ G^{*}_{i}/G^{*}_{i-1} $  of $ \varOmega_{G} $, $ 1\leq i\leq n $.  Observe that $ Z(P)\cap G^{*}_{1} \neq 1 $. Since $ Z(P)$ is normal in $G$ and $G^{*}_{1}$ is a minimal normal subgroup of $G$, it follows that $G^{*}_{1}  \leq Z(P)$. In particular, $H \cap G^{*}_{1} $ is normalised by $P$. Since $ |G:N_{G}(H\cap G^{*}_{1})| $ is a $ p $-number, it follows that $H \cap G^{*}_{1} $ is normal in $G$. Therefore either $H \cap G^{*}_{1}  = 1$ or $H \cap G^{*}_{1}  = G^{*}_{1}$.
	
	%Since $ G_{1}^{*}\leq Z(P) $, we have $ H\cap G^{*}_{1}\unlhd G $. The minimality of $ G_{1}^{*} $ implies that $ H\cap G_{1}^{*}=1 $ or $  G_{1}^{*}\leq H $. If $ H\cap G_{1}^{*}=1 $, then $ P=HG_{1}^{*} $ or $ HG_{1}^{*} $ is a maximal subgroup of $ P $. If $ P=HG_{1}^{*} $, then $ H $  covers or avoids every $ G $-chief factor of $ \varOmega_{G} $. This means that $ H $ is a a partial-{\rm CAP} subgroup of $ G $, as desired.
	
	Assume that $H \cap G^{*}_{1}  = 1$. For every $ G $-chief factor $ G_{i}/G_{i-1} $ with $ 1<i\leq r $, we see that either $P = HG^{*}_{i-1}$ or $HG^{*}_{i-1}$ is a maximal subgroup of $P$. In both cases, $P$ normalises $HG^{*}_{i-1}$ and hence $P \leq N_{G}(HG^{*}_{i-1}\cap G^{*}_{i})$. Since  $ |G:N_{G}(HG^{*}_{i-1}\cap G^{*}_{i})| $ is a $ p $-number, it follows that $HG^{*}_{i-1}\cap G^{*}_{i}$ is a normal subgroup of $G$. Therefore $H$ covers or avoids $ G^{*}_{i}/G^{*}_{i-1} $ with $ 1<i\leq r $. Thus $H$ covers or avoids every $ G $-chief factor of $ \varOmega_{G} $.  This implies that $H$ is a partial {\rm CAP}-subgroup of $G$, as desired. Hence we can assume that $H \cap G^{*}_{1}  = G^{*}_{1}$. By Lemma \ref{over}, the hypotheses are inherited by
	$ G/G^{*}_{1} $. By induction, $ H/G^{*}_{1} $ is is a partial {\rm CAP}-subgroup of $ G/G^{*}_{1} $. Hence $ G/G^{*}_{1} $ has  a chief series $$ \varGamma_{G/G^{*}_{1}}: 1 =G_{1}/G^{*}_{1} <G_{2}/G^{*}_{1}< \cdots < G_{n}/G^{*}_{1}= G/G^{*}_{1} $$   such that $ H/G^{*}_{1} $ covers or avoids every $ G/G^{*}_{1} $-chief factor $ G_{j}/G^{*}_{1}\big /G_{j-1}/G^{*}_{1} $ of $ \varGamma_{G/G^{*}_{1}} $, $ 2\leq j\leq n $. Therefore, $ G_{j}/G^{*}_{1}\leq HG_{j-1}/G^{*}_{1} $ or $ (H\cap G_{j})/G^{*}_{1}\leq G_{j-1}/G^{*}_{1} $. Consequently, $ G_{j}\leq HG_{j-1} $ or $ H\cap G_{j}\leq G_{j-1} $ with $ 2\leq j\leq n $. Hence $ H $  covers or avoids
	every $ G $-chief factor of a chief series of $ G $. Thus $ H $ is a partial {\rm CAP}-subgroup of $ G $, as desired.
\end{proof}

%\begin{lemma}[{\cite[Lemma 2.13]{skiba2007weakly}}]\label{nilpotent}
	%Let $ P $ be a nilpotent normal subgroup of $ G $. If $ P \cap \Phi(G) = 1 $, then $ P $ is the direct product of some minimal normal subgroups of $ G $.
%\end{lemma}

Recall that a subgroup $ H $ of a group $ G $ is said to be \textit{complemented} in  $ G $ if there exists a subgroup $ K $ of $ G $ such that $ G = HK $ and $ H \cap K = 1 $. In this case, $ K $ is called a complement of $ H $ in $ G $.

%\begin{lemma}
	%Let $ G $ be a  group with an elementary abelian normal Sylow $ p $-subgroup $ P $. Suppose that $ H $ is a proper subgroup of $ P $. Then the following statements are equivalent:
	%\hfill\\
	%\,\,\,\textcolor{white}{.......}\textnormal{(i)} $ H $ satisfies the partial $ \Pi  $-property in $ G $.\\
	%\textcolor{white}{......}\textnormal{(ii)} $ H $ is complemented in $ G $.
%\end{lemma}

\begin{lemma}\label{completed}
	Let $ G $ be a  group with an elementary abelian normal Sylow $ p $-subgroup $ P $. Suppose that $ H $ is a subgroup of $ P $. Then the following statements are equivalent:
	
	{\rm (1)} $ H $ satisfies the partial $ \Pi  $-property in $ G $.
	
	{\rm (2)} $ H $ is complemented in $ G $.
\end{lemma}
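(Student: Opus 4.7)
The plan is to reduce both implications to a structural decomposition of $P$ as an $\mathbb{F}_{p}[G/P]$-module. Since $P$ is abelian and $\gcd(|P|,|G/P|)=1$, Maschke's theorem applies and $P$ is completely reducible as a $G$-module; Schur--Zassenhaus will then supply the global complement once an invariant complement of $H$ has been carved out inside $P$.

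For $(1)\Rightarrow(2)$, I apply Lemma~\ref{pass} to the $p$-subgroup $H\leq P\unlhd G$ to obtain a chief series $1=G^{*}_{0}<G^{*}_{1}<\cdots<G^{*}_{r}=P<\cdots<G^{*}_{n}=G$ such that $|G:N_{G}(HG^{*}_{i-1}\cap G^{*}_{i})|$ is a $p$-number for every factor. Because the intersection $HG^{*}_{i-1}\cap G^{*}_{i}$ lies inside the abelian group $P$, it is $P$-invariant, and the $p$-number condition then forces it to be normal in $G$. Consequently, on each chief factor $G^{*}_{i}/G^{*}_{i-1}$ below $P$ the subgroup $H$ either covers (by Dedekind, $G^{*}_{i-1}(H\cap G^{*}_{i})=G^{*}_{i}$) or avoids ($H\cap G^{*}_{i}\leq G^{*}_{i-1}$). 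By Maschke I would write $P=V_{1}\oplus\cdots\oplus V_{r}$ as a direct sum of $G$-invariant irreducibles with $G^{*}_{j}=V_{1}\oplus\cdots\oplus V_{j}$, and set $L_{p}=\bigoplus_{j\notin I}V_{j}$, where $I$ indexes the covered factors. A straightforward count gives $|H||L_{p}|=|P|$, and a largest-index argument yields $H\cap L_{p}=1$, so $P=H\oplus L_{p}$ with $L_{p}\unlhd G$. Applying Schur--Zassenhaus to the normal Sylow $p$-subgroup $P/L_{p}$ of $G/L_{p}$ produces a complement $L/L_{p}$, and the preimage $L$ is the desired complement of $H$ in $G$.

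For $(2)\Rightarrow(1)$, let $L$ be a complement of $H$ and set $L_{p}=L\cap P$. An order count gives $P=H\oplus L_{p}$, and $L_{p}$ is normalised by the abelian group $P$ and by $L$, hence by $G=PL$, so $L_{p}\unlhd G$. Taking any chief series of $G$ refining the normal chain $1<L_{p}<P<G$, I would verify on each of the three segments (inside $L_{p}$, between $L_{p}$ and $P$, above $P$) that $HG_{i-1}\cap G_{i}$ reduces to either $G_{i-1}$ or $G_{i}$, using $H\cap L_{p}=1$, $HL_{p}=P$, and $H\leq P$ respectively. In every case $HG_{i-1}\cap G_{i}$ is normal in $G$, so the required index is $1$ and the partial $\Pi$-property holds trivially.

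The main obstacle is the complement construction in $(1)\Rightarrow(2)$: the subgroup $H$ need not itself be $G$-invariant, so one cannot invoke Maschke on $H$ directly. The point is that the cover/avoid pattern produced by Lemma~\ref{pass} is exactly what is needed to match a carefully chosen Maschke decomposition of $P$, and this combinatorial alignment must be set up before the Schur--Zassenhaus assembly of the global complement can be carried out.
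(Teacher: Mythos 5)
Your proof is correct, but it takes a genuinely different route from the paper's, most visibly in the implication (1)$\Rightarrow$(2). The paper argues by induction on $|G|$: it takes the chief series supplied by Lemma~\ref{pass}, shows $H\cap G_1^*$ is normal in $G$ (abelianness of $P$ plus the $p$-number condition), splits into the cases $G_1^*\leq H$ and $H\cap G_1^*=1$, passes to $G/G_1^*$ via Lemma~\ref{over}, and assembles the complement from the inductive one, invoking Maschke only once to split $P\cap K$ over $G_1^*$. You avoid induction altogether: from Lemma~\ref{pass} you deduce that $H$ covers or avoids every chief factor below $P$ (the normality of $HG_{i-1}^*\cap G_i^*$ holds exactly as you say, since $P$ normalises this subgroup and the index is a $p$-number, hence $1$), you choose a Maschke decomposition $P=V_1\oplus\cdots\oplus V_r$ compatible with the series, and your two checks do go through: the telescoping count over covered factors gives $|H||L_p|=|P|$ via Dedekind, and the largest-nonzero-component argument gives $H\cap L_p=1$, after which Schur--Zassenhaus applied to $G/L_p$ produces the complement. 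For (2)$\Rightarrow$(1) the two proofs share the key observation $P=H\times(P\cap B)$ with $P\cap B\unlhd G$, but the paper quotients by $P\cap B$ and inducts, whereas you exhibit the witnessing chief series refining $1<L_p<P<G$ directly and check that every relevant intersection is $G_{i-1}$ or $G_i$, so all indices are $1$. Your version is constructive and self-contained (it produces the complement and the witnessing series explicitly), at the cost of more bookkeeping; the paper's is shorter on the page because it delegates that bookkeeping to induction and Lemma~\ref{over}.
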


\begin{proof}
	%Let $ G = P\rtimes U $, where  $ U\in \mathrm{Hall}_{p'}(G) $. Note that $ P $ is a direct product of some minimal normal subgroups of $ G $.
	
	(1)$ \Rightarrow $(2). We argue by induction on $|G|$. If $ H=1 $ there is nothing to prove. By the Schur-Zassenhaus Theorem (\cite[Theorem~A.11.3]{MR1169099}), $P$ is complemented in $G$ by a Hall $p'$-subgroup $U$ of $G$. Hence we may assume that $1<H<P $. By Lemma \ref{pass}, $ G $ has  a chief series $$ \varOmega_{G}: 1 =G^{*}_{0} < G^{*}_{1} < \cdot\cdot\cdot <G^{*}_{r}=P < \cdot\cdot\cdot < G^{*}_{n}= G $$  passing through $ P $ such that $ |G:N_{G}(HG^{*}_{i-1}\cap G^{*}_{i})| $ is a $ p $-number  for every $ G $-chief factor $ G^{*}_{i}/G^{*}_{i-1} $  of $ \varOmega_{G} $, $1\leq i\leq n$. In particular, $ |G:N_{G}(H\cap G^{*}_{1})| $ is a $ p $-number, and so $ G=N_{G}(H\cap G^{*}_{1})P $. Since $ P $ is elementary abelian, we have  $ (H\cap G_{1}^{*})^{G}=H^{P}\cap G_{1}^{*}=H\cap G_{1}^{*} $. The minimality of $ G_{1}^{*} $ implies that $ H\cap G_{1}^{*}=G_{1}^{*} $ or $ 1 $. If $ H\cap G_{1}^{*}=G_{1}^{*} $, then $ G_{1}^{*}\leq H $. By Lemma \ref{over}, $ H/G_{1}^{*} $ satisfies the partial $ \Pi $-property in $ G/G_{1}^{*} $. By induction,  $ H/G_{1}^{*} $ has a complement in $ G/G_{1}^{*} $, $ K/G_{1}^{*} $ say. Thus $ G = HK $ and $ H\cap K=G_{1}^{*} $. Hence $ P = H(P \cap  K) $ and $ P \cap K \unlhd G $. By Maschke's theorem (\cite[Theorem~A.11.4]{MR1169099}), there exists a normal subgroup $ L $ of $ G $ such that $ P \cap K = G_{1}^{*} \times L $.  Let $ U $ be a Hall $ p' $-subgroup of $ G $. In this case, $LU$ is a complement of $H$ in $G$, as required.
	
	If $ H\cap G_{1}^{*}=1 $, then $ HG_{1}^{*}/G_{1}^{*} $ satisfies the partial $ \Pi  $-property in $ G/G_{1}^{*} $.  Hence $ HG_{1}^{*}/G_{1}^{*} $ has a complement, $ T/G_{1}^{*} $ say, in $ G/G_{1}^{*} $ by induction. It follows that $ G = HG_{1}^{*}T = HT $ and $ HG_{1}^{*}\cap T =G_{1}^{*} $. Since $ H \cap G_{1}^{*} = 1 $, we have  $ H \cap T= 1 $. Hence $ T $ is a complement of $ H $ in $ G $, as required.

	%There exists a chief series $$ \varGamma_{G}: 1 =G_{0} < G_{1} < \cdot\cdot\cdot < G_{n}= G $$ of $ G $ such that $ | G / G_{i-1} : N _{G/G_{i-1}} (HG_{i-1}/G_{i-1}\cap G_{i}/G_{i-1})| $ is a $ \pi (HG_{i-1}/G_{i-1}\cap G_{i}/G_{i-1}) $-number for every $ G $-chief factor $ G_{i}/G_{i-1} $ $ (1\leq i\leq n) $ of $ \varGamma_{G} $.
	
	%It is no loss to assume that $ 1<d<|P| $. If $ d = p $, then $ r_{p}(G) = 1 $. In this case, it is easy to see that all subgroups of $ P $ of order $ p $ are complemented in $ G $. If $ p^{2}\leq d < |P| $, then the required result follows by Lemma 2.9(2).
	
	(2)$ \Rightarrow $(1). Suppose that $ H $ is complemented in $ G $. We show that $H$ satisfies the partial $\Pi$-property in $G$ by induction on $|G|$. It is no loss to assume that $1<H<P$. Let $ B $ be a complement of $ H $ in $ G $. Then $ P\cap B\unlhd G $ and $ P = H \times (P\cap B) $. Since $H(P \cap B)/P\cap B$ is complemented in $G/P \cap B$, it follows that $H(P \cap B)/(P\cap B)$ satisfies the partial
	$\Pi$-property in $G/(P \cap B)$. Now, if $C/D$ is a $G$-chief factor below $P \cap B$, then $H\cap C\leq D$. Consequently, $H$ satisfies the partial $ \Pi  $-property in $ G $.
\end{proof}

The following lemma is a significant result proved by Zeng.

\begin{lemma}[{\cite[Theorem B]{Zeng}}]\label{Completed}
	Assume that a  $ p' $-group $ H $ acts faithfully on an elementary abelian $ p $-group $ P $. Assume that  $ G=P\rtimes H $ and let $ d $ be a power of $ p $ such that $ 1<d<|P| $. Then all subgroups of $ P $ of order $ d $ are complemented in $ G $ if and only if one of the following statements holds:
	
	{\rm (1)} $ G $ is supersoluble.
	
	{\rm (2)} $ H $ is cyclic and $ P $ is a homogeneous $ \mathbb{F}_{p}[H] $-module with all its irreducible $ \mathbb{F}_{p}[H] $-submodules having
	dimension $ k>1 $. Furthermore, $ k$ divides $\gcd(\log_p d, \log_p |P|) $.
\end{lemma}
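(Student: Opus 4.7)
The plan is to handle the two directions of the equivalence separately, since the forward direction contains nearly all of the difficulty.

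For the reverse direction, if $G$ is supersoluble (case (1)), then every $\mathbb{F}_{p}[H]$-composition factor of $P$ has dimension one; since $H$ is a $p'$-group acting on the elementary abelian group $P$, Maschke's theorem decomposes $P = U_{1} \oplus \cdots \oplus U_{m}$ with $|U_{i}| = p$. Given any subgroup $K$ of order $d$, I would construct an $H$-invariant complement $L$ of $K$ inside $P$ by an inductive argument on the one-dimensional summands, and then $LH$ complements $K$ in $G$. For case (2), I would view $P$ as a free module of rank $s$ over the field $E = \operatorname{End}_{\mathbb{F}_{p}[H]}(V) \cong \mathbb{F}_{p^{k}}$ (by Schur's lemma); since $k \mid \log_{p} d$, any $\mathbb{F}_{p}$-subgroup of order $d$ can, after a $\operatorname{GL}_{s}(E)$ change of basis, be placed in a normal form whose $E$-span admits an $H$-invariant complement, and a $G$-complement is then assembled from this and a Hall $p'$-subgroup.

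For the forward direction, assume every subgroup of $P$ of order $d$ is complemented in $G$ and that $G$ is not supersoluble, so that some irreducible $\mathbb{F}_{p}[H]$-submodule $V \leq P$ has dimension $k \geq 2$. I would argue in three steps. First, establish homogeneity of $P$: if $V_{1}, V_{2} \leq P$ are non-isomorphic irreducible $H$-modules, build $K \leq V_{1} \oplus V_{2}$ of order $d$ projecting nontrivially onto both factors, and use $\operatorname{Hom}_{H}(V_{1}, V_{2}) = 0$ to show that any $G$-complement of $K$ would yield an $H$-invariant complement of $K$ in $V_{1} \oplus V_{2}$, contradicting the non-$H$-invariance of $K$ forced by the mixing. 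Second, derive the divisibility $k \mid \log_{p} d$: since $V$ has no proper $H$-submodules, subgroups of $P$ of order $d$ that cut out a proper non-trivial part of some $H$-isotypic copy of $V$ cannot be $H$-invariant, and balancing this against the existence of complements forces $\log_{p} d$ to be a multiple of $k$. Third, prove $H$ is cyclic by a Clifford-theoretic reduction: $H$ embeds in $\operatorname{GL}_{s}(E)$, and complementation of carefully chosen non-$E$-linear subgroups of order $d$ forces $H$ into a Singer-cycle-like subgroup, in the spirit of Zassenhaus' theorem on solvable fixed-point-free actions.

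The main obstacle will be the cyclicity step. Turning the hypothesis that subgroups of a single intermediate order are complemented into strong rigidity on $H$ is considerably more delicate than the classical situation where \emph{all} subgroups are complemented, and demands constructing explicit obstructions to complementation whenever $H$ contains a non-cyclic abelian subgroup. A secondary difficulty is the sufficiency claim in case (2), where subgroups of order $d$ need not be $H$-invariant and a $G$-complement must be exhibited for each of them; the selection of good $\operatorname{GL}_{s}(E)$-coordinates together with the interplay with Maschke's theorem over $E$ is the technical heart of that argument.
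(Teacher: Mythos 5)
You are trying to prove something the paper never proves: Lemma~\ref{Completed} is quoted verbatim from Zeng (\cite[Theorem B]{Zeng}) and is used in this paper purely as an external ingredient, so there is no internal argument to reproduce; the only ``proof'' consistent with the paper is the citation. Judged as a standalone proof attempt, your text is a plan rather than a proof, and the gaps sit exactly at the decisive points. In the forward direction, the divisibility $k \mid \log_p d$ is asserted with no mechanism (``balancing this against the existence of complements forces \dots'' is not an argument: one must exhibit, for $\log_p d \not\equiv 0 \pmod k$, a concrete subgroup of order $d$ whose $G$-complement cannot exist, using that an abelian normal Sylow subgroup forces $P\cap B\trianglelefteq G$ for any complement $B$, hence an $H$-invariant complement in $P$). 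The cyclicity of $H$ is explicitly deferred (``the main obstacle''), with only an appeal to a Singer-cycle/Zassenhaus analogy; nothing in the sketch constructs the obstruction subgroups of order $d$ that fail to be complemented when $H$ is not cyclic, and this is the core of Zeng's theorem. The homogeneity step also needs more care than indicated: when $d<|V_1|$ or $d>|V_1\oplus V_2|$ your ``diagonal'' subgroup has to be padded or truncated, and the contradiction requires an actual determination of the $H$-submodules of $V_1\oplus V_2$ (which, for non-isomorphic constituents, are just $0,V_1,V_2,V_1\oplus V_2$) against the possible $H$-invariant complements, not merely the remark $\mathrm{Hom}_H(V_1,V_2)=0$.

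The reverse direction has the same character. Case (1) is plausible and essentially standard, but case (2) --- producing a complement in $G$ for an arbitrary, generally non-$H$-invariant subgroup of order $d$ when $P$ is homogeneous over $E=\mathrm{End}_{\mathbb{F}_p[H]}(V)\cong\mathbb{F}_{p^k}$ and $H$ is cyclic --- is exactly what you label the ``technical heart,'' and the proposed ``normal form after a $\mathrm{GL}_s(E)$ change of basis'' is not carried out (note also that a change of basis by $\mathrm{GL}_s(E)$ need not commute with the $H$-action unless it is chosen $E[H]$-linearly, so even the reduction to a normal form needs justification). In short: the overall architecture (homogeneity, divisibility, cyclicity; module-theoretic sufficiency) matches what a proof of Zeng's theorem must do, but none of the hard steps is actually established, so the proposal cannot be accepted as a proof of the lemma; citing \cite{Zeng} is both sufficient and what the paper itself does.
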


\begin{lemma}[{\cite[Lemma 3.3(4)]{Qian-2020}}]\label{cyclic}
	Let $ H $ be a $ p' $-group and let $ V $ be a faithful and irreducible $ \mathbb{F}_{p}[H] $-module. Assume that $ \mathrm{dim}_{\mathbb{F}_{p}}V $ is prime. Then $ H $ is cyclic if and only if $ V $ is not absolutely irreducible.
\end{lemma}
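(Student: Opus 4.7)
The plan is to extract $H$'s structure from its commuting algebra $E := \mathrm{End}_{\mathbb{F}_{p}[H]}(V)$, using the primality of $k := \dim_{\mathbb{F}_{p}} V$ to force a rigid dichotomy on $[E : \mathbb{F}_{p}]$. By Schur's lemma applied to the irreducible module $V$, the ring $E$ is a division ring; since it is finite, Wedderburn's little theorem makes it a finite field, say $E \cong \mathbb{F}_{p^{e}}$. The $H$-action commutes with $E$, so $V$ is naturally an $E$-vector space with $k = e \cdot \dim_{E} V$; in particular $e \mid k$. Primality forces $e = 1$ or $e = k$. The standard criterion for absolute irreducibility (namely that irreducibility is preserved under scalar extension to $\overline{\mathbb{F}_{p}}$ precisely when $E = \mathbb{F}_{p}$) identifies $e = 1$ with absolute irreducibility, so ``$V$ not absolutely irreducible'' is equivalent to $e = k$ with $\dim_{E} V = 1$.

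For the implication $(\Leftarrow)$, suppose $V$ is not absolutely irreducible, so $e = k$ and $V \cong E$ as $E$-module. Each $h \in H$ acts on $V$ by an $E$-linear map on a one-dimensional $E$-space, hence as multiplication by some scalar $\lambda(h) \in E^{\times}$. Faithfulness of the action embeds $H$ into the cyclic group $E^{\times} = \mathbb{F}_{p^{k}}^{\times}$, so $H$ is cyclic.

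For the converse $(\Rightarrow)$, suppose $H = \langle h\rangle$ is cyclic. For any nonzero $v \in V$, irreducibility gives $V = \mathbb{F}_{p}[h]\cdot v$, so as an $\mathbb{F}_{p}[h]$-module $V$ is cyclic, hence isomorphic to $\mathbb{F}_{p}[x]/(m(x))$ where $m$ is the minimal polynomial of $h$ on $V$. Irreducibility of $V$ forces $m$ to be irreducible in $\mathbb{F}_{p}[x]$, so $\mathbb{F}_{p}[h]$ is a field of degree $\deg m = k$ over $\mathbb{F}_{p}$ sitting inside $E$. Hence $e \geq k$; combined with $e \mid k$ this forces $e = k$, and since $k$ is prime we have $e \geq 2$, so $V$ is not absolutely irreducible.

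The only real pivot is the primality of $k$, which collapses $[E:\mathbb{F}_{p}]$ to the two extreme values $1$ or $k$; once that is in hand, both directions are routine applications of Schur's lemma, Wedderburn's little theorem, and cyclicity of multiplicative groups of finite fields. I do not anticipate any step being a genuine obstacle.
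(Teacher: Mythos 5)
Your argument is correct. Note that the paper does not prove this lemma at all: it is quoted verbatim from Qian and Zeng \cite[Lemma 3.3(4)]{Qian-2020}, so there is no in-paper proof to compare against; your write-up is a valid self-contained substitute. The structure is the standard one: Schur's lemma plus Wedderburn gives $E=\mathrm{End}_{\mathbb{F}_p[H]}(V)\cong\mathbb{F}_{p^e}$ with $e\mid k$, the criterion ``absolutely irreducible $\iff E=\mathbb{F}_p$'' converts the hypothesis, and primality of $k$ forces $e\in\{1,k\}$. Both directions go through: for $(\Leftarrow)$, each $h\in H$ is indeed $E$-linear (elements of $E$ commute with the $H$-action by definition), so on a one-dimensional $E$-space it is scalar multiplication and faithfulness embeds $H$ into the cyclic group $E^{\times}$; for $(\Rightarrow)$, the cyclic-module argument gives $V\cong\mathbb{F}_p[x]/(m)$ with $m$ irreducible of degree $k$ (here $h$-invariant subspaces are $H$-submodules precisely because $H=\langle h\rangle$), and the inclusion $\mathbb{F}_p[h]\subseteq E$ — which you use tacitly and which holds because $H$ is abelian, so polynomials in $h$ commute with the $H$-action — yields $e=k\geq 2$. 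Two cosmetic remarks: the $p'$-hypothesis is never needed in your proof (faithfulness plus irreducibility already rules out $p\mid |H|$ in the cyclic case), and in $(\Rightarrow)$ only $k>1$ is used, not primality, whereas primality is genuinely needed in $(\Leftarrow)$ to pin down $e=k$.
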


\section{Proofs}

We work towards the proofs of our main results.

\begin{lemma}\label{ele}
	Let $ P \in {\rm Syl}_{p}(G) $ and let $ d $ be a  power of $ p $ such that $ p^{2} \leq d < |P| $. Assume that $ O_{p'}(G)=1 $ and every subgroup of $ P $ of order $ d $ satisfies the partial $ \Pi $-property in $ G $. If $ G $ has a minimal normal subgroup of order $ d $, then $G/\mathrm{Soc}(G)$ is cyclic, $ \Phi(G) = 1 $ and $ P = \mathrm{Soc}(G)$ is a homogeneous $ \mathbb{F}_{p}[G] $-module.
	
	%{\rm (1)} $ G/\mathrm{Soc}(G) $ is $ p $-supersoluble;
	
	%{\rm (2)} $ \Phi(G) = 1 $ and $ P = \mathrm{Soc}(G)$ is a homogeneous $ \mathbb{F}_{p}[G] $-module.
	
	%{\rm (3)} Every irreducible $ \mathbb{F}_{p}[G] $-submodule of $ P $ is not absolutely irreducible.
\end{lemma}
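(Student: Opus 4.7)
The strategy is to first reduce the group via Theorem~\ref{know} and a socle analysis, and then to invoke Zeng's complementation theorem (Lemma~\ref{Completed}) once $P$ has been shown to be elementary abelian. Since $d\ge p^2>2$, Theorem~\ref{know} applies and yields that $G$ is $p$-soluble of $p$-length at most one; combined with $O_{p'}(G)=1$, it follows that $P\unlhd G$ and $G=P\rtimes H$ for some $H\in\mathrm{Hall}_{p'}(G)$. Lemma~\ref{order-d}(2) and Lemma~\ref{Normal} show that every minimal normal subgroup of $G$ is elementary abelian of order $d$; for such a minimal normal $N$, the subgroup $N\cap Z(P)$ is $G$-normal and non-trivial (since a $p$-group acting on a non-trivial $p$-group has non-trivial fixed points), so minimality forces $N\le Z(P)$. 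Hence $\mathrm{Soc}(G)\le Z(P)$ is a direct sum of minimal normals of order $d$.

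The crux of the proof is to show $P=\mathrm{Soc}(G)$, equivalently that $P$ is elementary abelian. Suppose otherwise: then $\Phi(P)\ne 1$ contains a minimal normal $N$ of $G$ of order $d$. My plan is to construct a subgroup $K\le P$ of order $d$ with $K\not\le\mathrm{Soc}(G)$ and $K\cap N$ a non-trivial proper subgroup of $N$, by choosing an element $y\in P\setminus\mathrm{Soc}(G)$ of sufficiently small order together with a subgroup $M_0\le N$, and setting $K=\langle y\rangle M_0$ (of order $d$ with $K\cap N=M_0$). Were $K$ to satisfy the partial $\Pi$-property, Lemma~\ref{pass} would produce a chief series $\varOmega_G$ through $P$ along which $|G:N_G(KG_{i-1}^*\cap G_i^*)|$ is a $p$-number for every chief factor. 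Since every chief factor below $P$ is a minimal normal $p$-subgroup of $G/G_{i-1}^*$ (hence centralised by $P/G_{i-1}^*$), the $p$-number condition combined with the $p'$-nature of $|G:P|$ upgrades to $KG_{i-1}^*\cap G_i^*\unlhd G$, and minimality then forces $K$ to cover or avoid each such chief factor. At the first index $j$ for which $N\le G_j^*$, one has $G_j^*=NG_{j-1}^*$ and $N\cap G_{j-1}^*=1$. A modular-law computation shows the avoid case forces $K\cap N=1$, contradicting the construction, while the cover case forces $K\le NG_{j-1}^*$ with $K\cap G_{j-1}^*=1$. Arranging $\varOmega_G$ so that its initial terms are built from minimal normals of $G$, hence with $G_{j-1}^*\le\mathrm{Soc}(G)$, the cover case then forces $K\le\mathrm{Soc}(G)$, contradicting $y\notin\mathrm{Soc}(G)$. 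Consequently $P$ is elementary abelian, and Maschke's theorem gives $P=\mathrm{Soc}(G)$.

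With $P$ an elementary abelian normal Sylow, Lemma~\ref{completed} shows every subgroup of $P$ of order $d$ is complemented in $G$, and Lemma~\ref{Completed} (Zeng's theorem) yields two possibilities: $G$ supersoluble, or $H$ cyclic with $P$ a homogeneous $\mathbb{F}_p[H]$-module. Supersolubility is impossible since $G$ has a chief factor of order $d\ge p^2>p$; hence $H$ is cyclic and $P$ is homogeneous, which — as $P$ is abelian — is the same as homogeneity as an $\mathbb{F}_p[G]$-module. In particular $G/\mathrm{Soc}(G)=G/P\cong H$ is cyclic. For $\Phi(G)=1$, note that $\Phi(G)$ is nilpotent and $O_{p'}(G)=1$ force $\Phi(G)\le P$, so $\Phi(G)$ is a $G$-submodule of $P$; if $\Phi(G)\ne 1$, Maschke provides an $H$-complement $P'$ of $\Phi(G)$ in $P$, and then $P'\rtimes H$ extends to a maximal subgroup of $G$ missing $\Phi(G)$, contradicting the Frattini characterisation. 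The main obstacle is in paragraph two: justifying that the chief series from Lemma~\ref{pass} can be arranged to process the minimal normals of $G$ before any chief factor above $\mathrm{Soc}(G)$, thereby forcing $G_{j-1}^*\le\mathrm{Soc}(G)$.
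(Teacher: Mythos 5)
Your endgame (once $P=\mathrm{Soc}(G)$ is known: Lemma~\ref{completed}, then Zeng's theorem, ruling out the supersoluble case because $G$ has a chief factor of order $d\geq p^{2}$, and the Frattini argument) matches the paper and is fine. The problem is the central step, and the obstacle you flag yourself is a genuine gap, not a technicality. The partial $\Pi$-property supplies only \emph{some} chief series with the normalizer-index condition, and Lemma~\ref{pass} lets you refine it only through a normal subgroup \emph{containing} $K$ (such as $P$); since your $K$ is built to contain $y\notin\mathrm{Soc}(G)$, there is no way to force the series through $\mathrm{Soc}(G)$, nor to ``arrange its initial terms from minimal normals of $G$'' so that $G_{j-1}^{*}\leq\mathrm{Soc}(G)$. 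Only $G_{1}^{*}$ is guaranteed to be a minimal normal subgroup; already $G_{2}^{*}$ may fail to lie in $\mathrm{Soc}(G)$, and then the cover case gives only $K\leq NG_{j-1}^{*}$ with $K\cap G_{j-1}^{*}=1$, which is perfectly compatible with $y\notin\mathrm{Soc}(G)$ — no contradiction. (There is also a secondary issue in the construction of $K$: when $d=p^{2}$ and every $y\in P\setminus\mathrm{Soc}(G)$ with $y^{p}\in\mathrm{Soc}(G)$ has order $p^{2}$ with $y^{p}\notin N$, a subgroup of order $d$ containing $y$ and meeting $N$ nontrivially need not exist.)

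It is instructive to compare with how the paper gains the control over the chief series that your sketch assumes: it does not try to prescribe the series, but instead chooses the subgroup to which Lemma~\ref{pass} is applied so that the relevant term of \emph{any} admissible series is forced. Concretely, the paper first proves that $G/\mathrm{Soc}(G)$ is $p$-supersoluble by taking a normal $T\leq P$ minimal with $T/(T\cap N)\nleq Z_{\UU_{p}}(G/(T\cap N))$, showing $T$ has a \emph{unique} maximal $G$-invariant subgroup $U$ (so in any chief series through $T$ the term below $T$ must be $U$), controlling the exponent of $T/(T\cap N)$ via Thompson critical subgroups (Lemmas~\ref{hypercenter}, \ref{critical}, \ref{charcteristic}, with a separate treatment of the $o(x)=8$, $d=4$ case using Lemma~\ref{phi}), and then choosing a subgroup $L$ of order $d$ with $LU=Q$ normal in $P$ of index $p$ over $U$; Lemma~\ref{pass} then forces $Q\unlhd G$, contradicting the uniqueness of $U$. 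From $p$-supersolubility of $G/\mathrm{Soc}(G)$ one gets $\Phi(G)=1$ (again via Lemma~\ref{phi}), hence $P$ elementary abelian and $P=\mathrm{Soc}(G)$, and only then the Zeng argument is applied. To repair your proof you would need a device of this kind (uniqueness of a maximal $G$-invariant subgroup, or an equivalent way of pinning down $G_{j-1}^{*}$); as written, the cover case is not closed.
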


\begin{proof}
	By Theorem \ref{know}, $ G $ is $ p $-soluble with $ p $-length at most $ 1 $. Since $ O_{p'}(G)=1 $, it follows that $P$ is normal in $G$ and so the Schur-Zassenhaus Theorem implies that $ G=P\rtimes H $, where $ H\in \mathrm{Hall}_{p'}(G) $. Since $ G $ has a minimal normal subgroup of order $ d $, it follows from Lemma \ref{order-d}(2) that every minimal normal subgroup of $ G $ is  elementary abelian  of order $ d $. Thus $ N = \mathrm{Soc}(G)\leq P = O_p(G)$ and $N$ is a completely reducible $G$-module by Maschke's theorem.
	
First we prove that $G/N$ is $p$-supersoluble. Assume not and let $ T \unlhd G $ be of minimal order such that $  T \leq P $ and $ T/(T\cap N) \nleq  Z_{\UU_{p}}(G/(T\cap N)) $. Then $ T\cap N= V_{1} \times \cdots \times V_{s}$, $ s\geq  1 $, where  $ V_{i} $ is a minimal normal subgroup of $G$ for all $1\leq i\leq s$.

	%Let $ V_{1}, ..., V_{s} $ be all normal subgroups of $ G $.  Then $ V_{i} $ are elementary abelian of order $ d $ and $ N=V_{1}\times \cdots \times V_{s} $.

	Assume that $ T $ has two different maximal $ G $-invariant subgroups, $ U_{1} $, $ U_{2} $ say. By minimality of $ T $, $ U_{j}/(U_{j}\cap N)\leq Z_{\UU_{p}}(G/(U_{j}\cap N)) $  for each $ j \in \{ 1, 2 \} $. This implies that $ U_{1}N/N $ and $ U_{2}N/N $  are contained in $ Z_{\UU_{p}}(G/N) $. Consequently, $ T/(T \cap N)\cong TN/N = (U_{1}N/N)(U_{2}N/N) \leq Z_{\UU_{p}}(G/N) $, a contradiction. Hence $ T $ has  a unique maximal $ G $-invariant subgroup, $ U $ say. Clearly,  $ T > T \cap N $. Hence $ U\geq T \cap N $. Since $ U/(U \cap N)\leq Z_{\UU_{p}}(G/(U\cap N)) $  and $ U \cap N = T \cap N $, we deduce that $ U/(T \cap N)\leq Z_{\UU_{p}}(G/(T\cap N)) $. Hence   $ T/U\nleq Z_{\UU_{p}}(G/U) $.

	%Set $ N_{0}=T\cap N $.   Notice that $ U/N_{0}=T/N_{0}\cap Z_{\UU}(G/N_{0}) $.
	
	%By Lemma \ref{in}, $ T/N_{0} $ possesses a cyclic subgroup $ \langle a \rangle N_{0}/N_{0} $ of order $ p $ or $ 4 $ (when $ T/N_{0} $ is  not quaternion-free) such that $ a \in T -U $.  Let $ a_{0} \in T-U  $ be an element such that $ o(a_{0})  $  is as small as possible. Since $ o(a_{0}) \leq o(a) $ and $ N_{0} $ is elementary abelian, we have that  $ o(a_{0})\leq p^{2} $, or $ o(a_{0})=8 $. The minimality of $ o(a_{0}) $ implies that $ a_{0}^{p}\in U $.
	
	%We shall complete the proof in the following two cases.
	
	%\vskip0.1in
	%\noindent\textbf{Case 1.} $ o(a_{0})> d $.
	%\vskip0.1in
	
	%In this case, $ o(a_{0}) = 8 $ and $ d=4 $. Since $ a_{0}^{2}\in U $ and $ o(a_{0}^{2})=4 $, we have $ \Phi(U)>1 $.
	%Note that $ N_{0}=V_{1}\times \cdots \times V_{s} $, every $ V_{i} $ is elementary abelian of order $ 4 $.  Let $ R $ be a minimal normal subgroup of $ G $ with $ R \leq \Phi(U) $, and Without loss of generality, we may assume $ R=V_{1} $. Set  $ \overline{G} = G/(V_{2} \times \cdots \times V_{s}) $. Clearly, $ \overline{N_{0}} \leq \overline{\Phi(U)}\leq \Phi(\overline{U}) $ and $ \overline{U}/\overline{N_{0}}\leq Z_{\UU}(\overline{G}/\overline{N_{0}}) $. By Lemma \ref{phi}, we have $ \overline{U}\leq Z_{\UU}(\overline{G}) $. This implies that $ V_{1}\cong \overline{N_{0}} \cong C_{2} $, a contradiction.
	
	%\vskip0.1in
	%\noindent\textbf{Case 2.} $ o(a_{0})\leq d $.
	%\vskip0.1in
	
	Write $ N_{0}=T\cap N $ and denote with bars the images in $ \overline{G}=G/N_{0} $. Then $ \overline{U}=\overline{T}\cap Z_{\UU_{p}}(\overline{G}) $. We claim that the exponent of $ \overline{T} $ is $ p $ or $ 4 $ (when $ \overline{T} $  is not quaternion-free).
	
	%If $ \overline{T} $ is either an odd order $ p $-group or a quaternion-free $ 2 $-group, then we use $ \Omega(\overline{T}) $ to denote the subgroup $ \Omega_{1} (\overline{T}) $.
	%Otherwise, $ \Omega (\overline{T}) = \Omega_{2} (\overline{T}) $.
	Let $ \overline{D} $ be a Thompson critical subgroup of $ \overline{T} $.  If $ \Omega(\overline{T})< \overline{T} $, then $ \Omega(\overline{T}) \leq Z_{\UU_{p}}(\overline{G}) $.
	By Lemma \ref{hypercenter}, we have that $ \overline{T}\leq Z_{\UU_{p}}(\overline{G}) $, a  contradiction. Thus $ \overline{T} = \overline{D} = \Omega(\overline{D}) $.  If $ \overline{T} $ is a non-abelian quaternion-free $ 2 $-group, then $ \overline{T} $ has a characteristic subgroup $ \overline{T_{0}} $ of index $ 2 $ by Lemma \ref{charcteristic}. The uniqueness of $ U $ implies $ T_{0}=U $. Thus  $ \overline{T} \leq Z_{\UU_{p}}(\overline{G}) $, which is impossible. Hence if $ \overline{T} $ is a non-abelian $ 2 $-group, then $ \overline{T} $ is not quaternion-free.  By Lemma \ref{critical}, the exponent of $ \overline{T} $ is $ p $ or $ 4 $ (when $ \overline{T} $  is not quaternion-free), as claimed.
	
	There exists  a normal subgroup $ \overline{Q} $ of $ \overline{P} $  such that $ \overline{U}< \overline{Q}< \overline{T} $ and $ |\overline{Q}:\overline{U}|=p $.  Let $ \overline{x}\in \overline{Q}\setminus \overline{U} $. Then $ \langle \overline{x} \rangle \overline{U}=\overline{Q} $ and  $ \langle \overline{x} \rangle $ has order $ p $ or $ 4 $. Since $N_{0} $ is  elementary abelian, we have  $ o(x)\leq p^{2} $ or $ o(x)\leq 8 $.
	
	If $ o(x) > d $, then $ o(x) = 8 $, $ p=2 $ and $ d = 4 $.  Since $ x^{2}\in U $ and $ o(x^{2}) = 4 $, we have $ \Phi(U) > 1 $.  Let $ A $ be a minimal normal subgroup of $ G $ contained in $  \Phi(U) $. Then $ A\leq T\cap N=N_{0} $.  Without loss of generality, we may assume $ A=V_{1} $. Set  $ \widetilde{G} = G/(V_{2} \times \cdots \times V_{s}) $ and denote with $\widetilde{X}$ the image of a subgroup $X$ of $G$ in $ \widetilde{G}$. Clearly, $ \widetilde{N_{0}} \leq \widetilde{\Phi(U)} \leq \Phi(\widetilde{U}) $ and $ \widetilde{U}/\widetilde{N_{0}}\leq Z_{\UU_{p}}(\widetilde{G}/\widetilde{N_{0}}) $. By Lemma \ref{phi}, we have $ \widetilde{U}\leq Z_{\UU_{p}}(\widetilde{G}) $. This implies that $ V_{1}\cong \widetilde{N_{0}}$ is cyclic, a contradiction.

	Suppose  that $ o(x) \leq d $.
	Let $ L $ be a  subroup of $ Q $ of  order  $ d $ such that $ \langle x \rangle \leq L $. Then $ Q=LU $. By hypothesis, $ L $ satisfies the partial $ \Pi  $-property in $ G $. Applying Lemma \ref{pass}, $ G $ has  a chief series
	$$ \varOmega_{G}: 1 =G^{*}_{0} < G^{*}_{1} < \cdot\cdot\cdot <G^{*}_{r-1} <G^{*}_{r}=T < \cdot\cdot\cdot < G^{*}_{n}= G $$
	passing through $ T $ such that $ |G:N_{G}(LG^{*}_{i-1}\cap G^{*}_{i})| $ is a $ p $-number  for every $ G $-chief factor $ G^{*}_{i}/G^{*}_{i-1}$ of $ \varOmega_{G} $, $ 1\leq i\leq n $. Since $ U $ is the unique maximal  $G$-invariant subgroup of $T$, it follows that $ G_{r-1}^{*}=U $. Therefore $ |G:N_{G}(LU\cap T|=|G:N_{G}(Q)| $ is a $ p $-number. It follows that $ Q\unlhd G $, which contradicts the fact that $ U $ is the  unique maximal $ G $-invariant subgroup of  $ T $.
	
	Consequently, $G/N$ is $p$-supersoluble. Assume that $ \Phi(G) > 1 $. Let $ B $ be a minimal normal subgroup of $G$ contained in $ \Phi(G) $. Then $ B\leq N=\mathrm{Soc}(G) $. Since $N$ is a completely reducible $G$-module, there exists $ K\unlhd G $ such
	that $ N = B \times K $.  Observe that $ N/K\leq \Phi(G)K/K \leq \Phi(G/K) $ and $ (G/K)/(N/K)\cong G/N $ is $ p $-supersoluble. It follows by Lemma \ref{phi} that $ G/K $ is $ p $-supersoluble, and thus $ B\cong N/K$ is cyclic, a contradiction. As a consequence, $ \Phi(G) = 1 $. Then $ \mathrm{Soc}(G)=N=P $. By Lemma~\ref{completed}, every subgroup of $P$ of order $ d $ is complemented in $G$. Applying Lemma~\ref{Completed}, we conclude that $G/\mathrm{Soc}(G)$ is cyclic and $P$ is a homogeneous $ \mathbb{F}_{p}[G] $-module.
\end{proof}

\begin{lemma}\label{orderp2}
	Let $ P \in {\rm Syl}_{p}(G) $ and let $ d $ be a  power of $ p $ such that $ 1 < d < |P| $. Assume that every subgroup of $ P $ of order $ d $ satisfies the partial $ \Pi $-property in $ G $, and assume further that every cyclic subgroup of $ P $ of order $ 4 $  satisfies the partial $ \Pi $-property in $ G $ when $ d = 2 $ and $ P $  is not quaternion-free. If the $p$-rank of $G$ is greater than $1$, then  $ d\geq p^{2}|\Phi(P)| $.
\end{lemma}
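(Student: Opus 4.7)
The plan is to argue by induction on $|G|$, passing from $G$ to suitable quotients and deriving a contradiction from the assumption $d < p^{2}|\Phi(P)|$.

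First I would reduce to the case $O_{p'}(G) = 1$ via Lemma \ref{over}; this preserves both the hypotheses of the lemma and its conclusion, and by Theorem \ref{know} it forces $P$ to be a normal Sylow $p$-subgroup of $G$, so that the standard fact $\Phi(P) \leq \Phi(G)$ applies. Since $d = p$ (with the cyclic-$4$ condition when $p = 2$) would imply, by Proposition $1.6$ of \cite{Chen-2013}, that $G$ is $p$-supersoluble, contradicting the assumption on the $p$-rank, we obtain $d \geq p^{2}$, which settles the case $\Phi(P) = 1$. Hence I may assume $\Phi(P) > 1$ and seek to contradict $d < p^{2}|\Phi(P)|$.

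The next step is to rule out a minimal normal subgroup $N$ of $G$ of order $p$. If such $N$ existed, then when $p$ is odd or $d \geq p^{3}$, Lemma \ref{over} passes the hypotheses to $G/N$ with parameter $d/p$ (no new cyclic-$4$ condition enters in this range), while the $p$-rank of $G/N$ remains greater than $1$ because a $G$-chief factor of order at least $p^{2}$ cannot be $N$. The inductive hypothesis then yields $d/p \geq p^{2}|\Phi(P/N)|$, which---using $|\Phi(P/N)| = |\Phi(P)|/p$ when $N \leq \Phi(P)$ and $|\Phi(P/N)| = |\Phi(P)|$ otherwise---gives the contradiction $d \geq p^{2}|\Phi(P)|$. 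The exceptional case $p = 2$, $d = 4$ is handled directly: Lemma \ref{also} forces every subgroup of $P$ of order $2$ to satisfy the partial $\Pi$-property in $G$ (since for such $K$ the subgroup $NK$ has order $4 = d$ and is covered by the hypothesis); combined with the cyclic-$4$ assumption, Proposition $1.6$ of \cite{Chen-2013} then forces $G$ to be $2$-supersoluble, contradicting the $p$-rank condition.

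Now I would choose a minimal normal subgroup $N$ of $G$ with $N \leq \Phi(P)$, which exists since $\Phi(P) > 1$ is $G$-invariant. From the previous step $|N| \geq p^{2}$, while Lemma \ref{ele} combined with $\Phi(P) \leq \Phi(G)$ forces $|N| < d$, for $|N| = d$ would give $\Phi(G) = 1$ and hence $\Phi(P) = 1$. Lemma \ref{over} passes the hypotheses to $G/N$ with parameter $d/|N| > 1$. Were $G/N$ of $p$-rank $1$, then $P/N \leq Z_{\UU}(G/N)$; since $N \leq \Phi(P)$, this implies $P/\Phi(P) \leq Z_{\UU}(G/\Phi(P))$, and Lemma \ref{hypercenter} would then force $G$ itself to be $p$-supersoluble, contradicting the $p$-rank assumption. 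Thus $G/N$ has $p$-rank greater than $1$, and the inductive hypothesis yields $d/|N| \geq p^{2}|\Phi(P)|/|N|$, i.e., the sought contradiction $d \geq p^{2}|\Phi(P)|$. The main obstacle will be the bookkeeping of the cyclic-$4$ hypothesis under quotients in characteristic $2$: apart from the case $d = 4$ handled above via Lemma \ref{also}, the induction in the final step may produce a quotient with parameter $d/|N| = 2$, for which the cyclic-$4$ condition on $P/N$ must be transferred from $G$, requiring a careful choice of a chief series of $G$ through $N$ witnessing the partial $\Pi$-property of a relevant subgroup of order $d$, so that the induced chief series of $G/N$ delivers the needed condition.
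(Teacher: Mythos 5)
Your reduction steps are sound and in fact largely parallel the paper's own argument: the minimal-counterexample setup, $d\geq p^{2}$ via \cite[Proposition 1.6]{Chen-2013}, the exclusion of a minimal normal subgroup of order $d$ via Lemma~\ref{ele} (though you should also quote Lemma~\ref{order-d}(1), since you need to know beforehand that every minimal normal subgroup is a $p$-group of order at most $d$ before ``$\neq d$'' can become ``$<d$''), the treatment of $p=2$, $d=4$ via Lemma~\ref{also}, and the quotient computation over a minimal normal subgroup of order $p$ (the paper performs essentially the same computation with a minimal normal subgroup $T$ of order $2$ in its subcase ``$d\geq 8$ and $N$ not the unique minimal normal subgroup''). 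However, the proof is not complete: the case you flag at the end as ``the main obstacle'' --- $p=2$ and $|N|=d/2$, so that the quotient parameter is $d/|N|=2$ --- is precisely the heart of the lemma, and you give no argument for it. Your suggested fix, transferring the cyclic-order-$4$ hypothesis to $G/N$ ``by a careful choice of a chief series'', has no supporting mechanism: Lemma~\ref{over} only yields the partial $\Pi$-property of $LN/N$ for a subgroup $L$ of order $d$, never of a proper subgroup of $LN/N$, and Lemma~\ref{also} covers only the order-$p$ situation; with the tools available there is no way to conclude that cyclic subgroups of order $4$ of $P/N$ satisfy the partial $\Pi$-property in $G/N$, and the paper does not attempt this.

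What the paper actually does in this configuration is a direct argument inside $G$: if $P/N$ is quaternion-free, then \cite[Proposition 1.6]{Chen-2013} applied to $G/N$ with parameter $2$ makes $G/N$, and hence $G$ (since $N\leq\Phi(G)$ and $p$-supersolubility is a saturated formation), $2$-supersoluble, a contradiction; otherwise it chooses $M\unlhd G$ of minimal order with $N\leq M\leq P$ and $M/N\nleq Z_{\UU}(G/N)$, uses a Thompson critical subgroup together with Lemmas~\ref{in}, \ref{hypercenter}, \ref{critical} and \ref{charcteristic} to force $M/N$ to have exponent $2$ or $4$, and then constructs a subgroup $L\leq P$ of order $d$ whose product with the unique maximal $G$-invariant subgroup $U$ of $M$ is a subgroup $Q$ with $U<Q<M$ normal in $P$; Lemma~\ref{pass} applied to $L$ along a chief series through $N$ and $M$ then forces $Q\unlhd G$, contradicting the uniqueness of $U$ (with a separate short quotient argument, as in your step two, when $N$ is not the unique minimal normal subgroup). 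None of this machinery appears in your proposal, so the lemma remains unproved exactly where it is hardest.
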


\begin{proof}
	Assume the result is not true and let $ G $ be a counterexample of minimal order. Then $ d<p^{2}|\Phi(P)| $. We may assume that $ O_{p'}(G)=1 $. By Theorem \ref{know}, $G$ is $p$-soluble of $p$-length at most $1$. Hence $P$ is normal in $G$ and $ \mathrm{Soc}(G)\leq P $.  If $ d=p $, then $ G $ is $ p $-supersoluble by \cite[Proposition 1.6]{Chen-2013}, a contradiction. Hence $ d \geq p^{2} $ and $ \Phi(P)>1 $. By Lemma \ref{order-d}(1), every minimal normal subgroup of $ G $ is a $ p $-group of order at most $ d $. If $ G $  has  a minimal normal subgroup  of order $ d $, then $ \Phi(P)=\Phi(G)=1 $ by Lemma \ref{ele}, a contradiction. Hence every minimal normal subgroup of $ G $ has order at most $ \frac{d}{p} $.
	
	Let $ N $ be a minimal normal subgroup of $G$ contained in $ \Phi(P) $. Clearly, $ \frac{d}{|N|}\geq p $ since $ |N|\leq \frac{d}{p} $. Note that the class of all $ p $-supersoluble groups is a saturated formation. As $ N \leq \Phi(P) \leq \Phi(G) $ and the $p$-rank of $G$ is greater than one, we can  conclude that the $p$-rank of $G/N$ is greater than $1$. By Lemma \ref{over}, every subgroup of $ P/N $ of order $ \frac{d}{|N|} $ satisfies the partial $ \Pi $-property in $ G/N $. If $p\geq 3 $, then $ G/N $ satisfies the hypotheses of the lemma. The minimal choice of $ G $ implies that $ \frac{d}{|N|}\geq p^{2}|\Phi(P/N)| = p^{2}|\Phi(P)/N| $, and hence $ d\geq p^{2}|\Phi(P)| $, a contradiction. This yields that $ p=2 $ and $ |N|=\frac{d}{2} $.
	
	Therefore  every subgroup of $ P/N $ of order $ 2 $ satisfies the partial $ \Pi $-property in $ G/N $. By \cite[Proposition 1.6]{Chen-2013}, we may assume that $ P/N $ is not quaternion-free. By Lemma \ref{Normal} that every minimal normal subgroup of $ G/N $ of order divisible by $ 2 $ has order $ 2 $. Thus every minimal normal subgroup of $G$ different from $ N $ has order $ 2 $.
	
	If $ G/N $ is $2$-supersoluble, then $ G $ is $2$-supersoluble, a contradiction. Hence $ G/N $ is not $2$-supersoluble. Let $ M$ be a normal subgroup of $G$ of minimal order containing $N$ such that $ M/N \nleq Z_{\UU}(G/N) $ and $ M\leq P $. Let $U/N$ be a minimal normal subgroup of $G/N$ contained in $M/N$. Then $ U/N = M/N \cap Z_{\UU}(G/N) $.  By Lemma \ref{in}, $ M/N $ possesses a cyclic subgroup $ \langle x\rangle N/N $ of order $ 2 $ or $ 4 $  such that $ x\in M \setminus U $.
	
	Denote with bars the images in $ \overline{G}=G/N$. We will show that the exponent of $ \overline{M} $ is $ 2 $ or $ 4 $ (when $ \overline{M} $  is not quaternion-free).   Let $ \overline{D} $ be a Thompson critical subgroup of $ \overline{M} $.  If $ \Omega(\overline{D})< \overline{M} $, then $ \Omega(\overline{D})\leq \overline{U} $ by the  minimal choice of $M$. Thus  $ \Omega(\overline{D})\leq Z_{\UU}(\overline{G}) $.
	By Lemma \ref{hypercenter}, we have that $ \overline{M}\leq Z_{\UU}(\overline{G}) $, a  contradiction. Hence  $ \overline{M} = \overline{D} = \Omega(\overline{D}) $.  If $ \overline{M} $ is a non-abelian quaternion-free $ 2 $-group, then $ \overline{M} $ has a characteristic subgroup $ \overline{M_{0}} $ of index $ 2 $ by Lemma \ref{charcteristic}. The minimal choice of $ M $ implies $ M_{0}=U $. Thus  $ \overline{M} \leq Z_{\UU}(\overline{G}) $, which is impossible. Hence if $\overline{M} $ is a non-abelian $ 2 $-group, then $ \overline{M} $ is not quaternion-free.  By Lemma \ref{critical}, the exponent of $ \overline{M} $ is $ 2 $ or $ 4 $ (when $ \overline{M} $  is not quaternion-free), as desired.
	
	%Since is $ N $ elementary abelian, we have  that every non-identity element of $ M $ has order $ 4 $ or $ 8 $.
	
	Suppose that $ d\geq 8 $ and  $ N $ is the unique minimal normal subgroup of $ G $. There exists  a normal subgroup $ \overline{Q} $ of $ \overline{P} $  such that $ \overline{U}< \overline{Q}< \overline{M} $ and $ |\overline{Q}:\overline{U}|=2 $.  Let $ \overline{y}\in \overline{Q}\setminus \overline{U} $. Then $ |\langle \overline{y} \rangle|\leq 4 $ and $ \langle \overline{y} \rangle \overline{U}=\overline{Q} $. Since $ N $ is  elementary abelian, we have  $ o(y)\leq 8 $.  Let $ L $ be a  subroup of $ Q $ of  order  $ d $ such that $ \langle y \rangle \leq L $. Then $ Q=LU $. By hypothesis, $ L $ satisfies the partial $ \Pi  $-property in $ G $. By the uniqueness of $ N $ and Lemma \ref{pass}, $ G $ has  a chief series
	$$ \varOmega_{G}: 1 =G^{*}_{0} < G^{*}_{1}=N < \cdot\cdot\cdot <G^{*}_{r-1} <G^{*}_{r}=M < \cdot\cdot\cdot < G^{*}_{n}= G $$
	passing through $ M $ and $ N $ such that $ |G:N_{G}(LG^{*}_{i-1}\cap G^{*}_{i})| $ is a $ p $-number  for every $ G $-chief factor $ G^{*}_{i}/G^{*}_{i-1}$  of $ \varOmega_{G} $,  $1\leq i\leq n$. The choice of $M$  yields $ G_{r-1}^{*}=U $. Therefore $ |G:N_{G}(LU\cap M)|=|G:N_{G}(Q)| $ is a $2$-number. It follows that $ Q\unlhd G $, which contradicts the fact that $ U $ is the  unique maximal $ G $-invariant subgroup of $ M $.
	
	Suppose that $ d\geq 8 $ and  $ N $ is not the unique minimal normal subgroup of $ G $. Then $ G $ has a minimal normal subgroup of order $ 2 $, $ T $ say.  Then the $2$-rank of $G/T$ is greater than $1$. Note that every subgroup of $ P/T $ of order $ \frac{d}{2} $ satisfies the partial $ \Pi  $-property in $ G/T $ and $ \frac{d}{2}\geq 4 $. The minimal choice of $ G $ implies $ \frac{d}{2} \geq 2^{2}|\Phi(P/T)| $. Obviously $ |\Phi(P/T)| \geq \frac{|\Phi(P)|}{2} $, and it follows that $ d\geq 2^{2}|\Phi(P)| $, a contradiction.
	
	Suppose that $ d = 4 $. Then $ |N|=2 $, and thus $ N\leq Z(G) $. By hypothesis,  every subgroup of $ P $ of order $ 4 $ satisfies the partial $ \Pi $-property in $ G $. Now we claim that  every subgroup $ X $ of $ P $ of order
	$ 2 $ satisfies the partial $ \Pi $-property in $ G $. We may assume that $ N\not =X $. Then $ XN $ has order $ 4 $. Hence $ XN $ satisfies the partial $ \Pi $-property in $ G $. By Lemma  \ref{also}, $ X $ satisfies the partial $ \Pi $-property in $ G $. By  \cite[Proposition 1.6]{Chen-2013}$,  G $ is $ 2 $-supersoluble, a contradiction. The proof of the lemma is now complete.
\end{proof}

\begin{lemma}\label{dim}
Let $ P \in {\rm Syl}_{p}(G) $ and let $ d $ be a  power of $ p $ such that $ 1 < d < |P| $. Assume that every subgroup of $ P $ of order $ d $ satisfies the partial $ \Pi $-property in $ G $, and assume further that every cyclic subgroup of $ P $ of order $ 4 $  satisfies the partial $ \Pi $-property in $ G $ when $ d = 2 $ and $ P $  is not quaternion-free. Assume that $ G=P\rtimes H $, where $ H\in \mathrm{Hall}_{p'}(G) $, $P$ is elementary abelian and the $p$-rank of $G$ is greater than $1$. Then the following statements hold:

	{\rm (1)} Then $ P $ is a homogeneous $ \mathbb{F}_{p}[H] $-module with all its irreducible $ \mathbb{F}_{p}[H] $-submodules having
	dimension $ k>1 $ and $ k $ divides $ \gcd(\log_pd, \log_p |P|) $.
	
	{\rm (2)} Every irreducible $ \mathbb{F}_{p}[H] $-submodule of $ P $ is not absolutely irreducible.
\end{lemma}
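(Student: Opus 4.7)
My plan is to deduce both statements from Lemma~\ref{Completed} (Zeng's theorem) after reducing to a faithful action, combined with Schur's lemma. The first move is to translate the partial $\Pi$-property into complementation: since $P$ is a normal elementary abelian Sylow $p$-subgroup of $G = P \rtimes H$, Lemma~\ref{completed} gives that every subgroup of $P$ of order $d$ is complemented in $G$.

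Next I would reduce to the case where the Hall $p'$-complement acts faithfully. Setting $C := C_H(P)$, the subgroup $C$ is normal in $G$ (it is $H$-invariant, and $P$, being abelian, centralizes $C$), so $G/C = (PC/C) \rtimes (H/C)$ with $H/C$ acting faithfully on $PC/C \cong P$. Lemma~\ref{over} combined with Lemma~\ref{completed} applied in the quotient show that every order-$d$ subgroup of $PC/C$ is complemented in $G/C$. Lemma~\ref{Completed} then leaves two cases. If $G/C$ is supersoluble, every $G/C$-chief factor inside $PC/C$, and hence every $G$-chief factor inside $P$, has order $p$, forcing the $p$-rank of $G$ to be $1$ and contradicting the hypothesis. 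Otherwise $H/C$ is cyclic and $PC/C$ is a homogeneous $\mathbb{F}_p[H/C]$-module whose irreducible submodules have dimension $k > 1$ with $k \mid \gcd(\log_p d, \log_p |P|)$. Since $C$ acts trivially on $P$, the $\mathbb{F}_p[H]$- and $\mathbb{F}_p[H/C]$-module structures on $P$ coincide, and statement~(1) follows.

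For statement~(2), let $V$ be an irreducible $\mathbb{F}_p[H]$-submodule of $P$. Because $C$ acts trivially on $V$, the ring $D := \mathrm{End}_{\mathbb{F}_p[H]}(V) = \mathrm{End}_{\mathbb{F}_p[H/C]}(V)$ is, by Schur's lemma, a finite field, say $\mathbb{F}_{p^e}$. As $H/C$ is cyclic (hence abelian), its image in $\mathrm{End}_{\mathbb{F}_p}(V)$ commutes with itself and therefore lies in $D$; this makes $V$ a simple $D$-module, forcing $V \cong D$ as $\mathbb{F}_p$-spaces and hence $\dim_{\mathbb{F}_p} V = e = k$. Thus $V$ is absolutely irreducible precisely when $k = 1$, and the hypothesis $k > 1$ rules this out. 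The main obstacle in this strategy is the reduction to the faithful setting: verifying normality of $C_H(P)$, checking that the complementation hypothesis descends to $G/C$, and confirming that the module-theoretic conclusions of Zeng's theorem transfer back unchanged to the $\mathbb{F}_p[H]$-structure on $P$.
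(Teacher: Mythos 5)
Your proposal is correct, but it takes a genuinely different route from the paper's. For statement (1) the paper argues by induction on $|G|$: it decomposes $P$ into irreducible $\mathbb{F}_p[H]$-summands, uses Lemma~\ref{order-d} to force $d\geq p^{n_t}\geq p^{2}$, settles the base case $d=p^{n_t}$ by Lemmas~\ref{completed} and~\ref{Completed}, rules out $t=2$ by an order count, and otherwise passes to the quotients $G/V_{1}$ and $G/V_{2}$. You instead apply Zeng's theorem once and directly, after first factoring out $C=C_{H}(P)$ so that the faithfulness hypothesis of Lemma~\ref{Completed} is genuinely met (a point the paper leaves implicit), and you use the $p$-rank hypothesis to exclude the supersoluble branch; this exclusion is legitimate because every $p$-chief factor of $G$ is $G$-isomorphic to a chief factor below the normal Sylow subgroup $P$, and these correspond, with the same orders, to $G/C$-chief factors below $PC/C$. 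For statement (2) the divergence is larger: the paper's argument is independent of cyclicity — it takes two isomorphic irreducible summands $U,V$, uses \cite[Proposition~B.8.2]{MR1169099} to list the minimal normal subgroups of $G$ inside $U\times V$ when $U$ is absolutely irreducible, and contradicts the partial $\Pi$-property via Lemma~\ref{pass} applied to a core-free subgroup of order $d$ containing a diagonal element — whereas you exploit the cyclicity of $H/C$ already delivered by Zeng's theorem: the image of $\mathbb{F}_p[H]$ in $\mathrm{End}_{\mathbb{F}_p}(V)$ is commutative, hence lies in the Schur endomorphism field $D$, so every $D$-submodule is an $\mathbb{F}_p[H]$-submodule, giving $V\cong D$ and $\mathrm{End}_{\mathbb{F}_p[H]}(V)\cong\mathbb{F}_{p^{k}}\neq\mathbb{F}_p$ since $k>1$; this is in effect the standard fact that absolutely irreducible modules of abelian groups are one-dimensional. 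Both arguments are sound; yours is shorter and makes the reduction to a faithful action explicit, at the price of relying on the full strength of Zeng's classification (including cyclicity) for both parts, while the paper's proof of (2) works directly from the partial $\Pi$-property and would survive even without knowing that $H$ acts through a cyclic group.
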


\begin{proof}
 It is clear that $P$ is a completely reducible $ \mathbb{F}_{p}[H] $-module by Maschke's theorem. Write $ P = V_{1} \times \cdots \times V_{t} $, where $ V_{i} $ is an irreducible $ \mathbb{F}_{p}[H] $-submodule of $ P $ for all $ 1\leq i\leq t $. Write $ \mathrm{dim}V_{i}= n_{i} $ and suppose that $ n_{1} \leq n_{2} \leq \cdots \leq n_{t}  $.  By Lemma \ref{order-d}(1), we have $ d\geq p^{n_{t}} $. Since $G$ is not $p$-supersoluble, it turns out that $ d\geq p^{n_{t}}\geq p^{2} $.

(1) We argue by induction on $|G|$. If $ d=p^{n_{t}} $, then $ n_{1}=n_{2}=\cdots=n_{t} $ by Lemma \ref{order-d}(2). In this case, the conclusion follows by Lemmas~\ref{completed} and~\ref{Completed}. We may assume that $ d>p^{n_{t}} $ and $ t>1 $.
	
	Suppose that $ t = 2 $. By Lemma \ref{over}, every subgroup of $ P/V_{1} $ of order $ \frac{d}{|V_{1}|} $ satisfies the partial $ \Pi $-property in $ G/N $. Applying Lemma \ref{order-d}(1), we see that $ |P/V_{1}|=p^{n_{2}}\leq \frac{d}{p^{n_{1}}} $. Consequently, $ |P|=p^{n_{1}+n_{2}}\leq d<|P| $, a contradiction. Therefore $ t\geq 3 $. Note that every subgroup of $ P/V_{j} $ of order $ \frac{d}{|V_{j}|} $ satisfies the partial $ \Pi $-property in $ G/V_{j} $ and the $p$-rank of $ G/V_{j}$ is equal to $n_{t}>1 $ for $ j=1, 2 $. Applying the inductive hypothesis to $ G/V_{1} $ and $ G/V_{2} $, we deduce that $ V_{1}, ... , V_{t} $ are isomorphic $ \mathbb{F}_{p}[H] $-modules of the same dimension $ k>1 $. Applying the inductive hypothesis to $ G/V_{1} $, we have $ k \big| \gcd(\log_p d-k, \log_p |P|-k) $. Hence $ k\big|\gcd(\log_pd, \log_p |P|) $, as wanted.
	
	(2) Let $ U $ and $ V $ be two isomorphic irreducible $ \mathbb{F}_{p}[H] $-modules of $ P $, and let $ \varphi : U \mapsto V $ be an $H$-isomorphism. Suppose that $ U $ is absolutely irreducible. Then $ \mathrm{End}_{\mathbb{F}_{p}[H]}(U) \cong\mathbb{F}_{p}  $. According to \cite[Proposition~B.8.2]{MR1169099}, the minimal normal subgroups of $G$ contained in $ U\times V $ are exactly
	\begin{equation*}
		U \; \text{and} \; V_{m}=\{u^{m}\varphi(u)|u\in U \}, \; \text{where} \; m = 0, 1, ..., p-1.
	\end{equation*}
	
	%\begin{center}
		%$ U $ and $ V_{m} = \{u^{m}\varphi(u)|u\in U \}$, where $ m = 0, 1, ..., p-1 $.
	%\end{center}
	
	\noindent Let $ 1 \not = u_{1} \in U  $, and let $ A $ be a subgroup of $ U\times V $ of order $ d $ such that $ \langle u_{1}, \varphi(u_{1})\rangle \subseteq A $. It is rather easy to see that $ \mathrm{Core}_{G}(A)=1 $. By Lemma \ref{pass}, there exists a minimal normal subgroup $ B $ of $G$ contained in $ U\times V $ such that $ |G:N_{G}(A\cap B)| $ is a $ p $-number. Since  $ P $ is elementary abelian, we have $ A\cap B\unlhd G $. The minimality of $ B $ yields that $ A\cap B=B $ or $ 1 $. If $ A\cap B=B $, then $ \mathrm{Core}_{G}(A)>1 $, a contradiction. Therefore $ A\cap B=1 $. However,  $ 1\not =u_{1}\in A\cap U $ and $ 1\not=u_{1}^{m}\varphi(u_{1})\in A \cap V_{m} $ for $ m = 0, 1, ..., p-1 $. This  contradiction yields that $ U $ is not absolutely irreducible.
\end{proof}

\begin{lemma}\label{contained-in}
	Let $ G $ be a $ p $-soluble group of $p$-rank greater than $1$ and let $P$ be a Sylow $p$-subgroup of $ G $ with $ |P|\geq p^{2} $. Suppose that every $ 2 $-maximal subgroup of $ P $ satisfies the partial $ \Pi $-property in $ G $. If $ Q $ is a $ 2 $-maximal subgroup of $ P $, then  $ \Phi(P) $ is contained in $ Q $.
\end{lemma}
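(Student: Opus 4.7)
The strategy is to argue by contradiction and induction on $|G|$. Suppose $Q$ is a 2-maximal subgroup of $P$ with $\Phi(P)\not\leq Q$; then $M:=Q\Phi(P)$ is a maximal subgroup of $P$ (since $\Phi(P)$ consists of non-generators), with $|M:Q|=p$. The first reduction is to $O_{p'}(G)=1$: taking $N=O_{p'}(G)$, Lemma~\ref{over} transfers the 2-maximal hypothesis to $G/N$, the $p$-rank is preserved (since $N$ is a $p'$-group), and induction forces $\Phi(PN/N)\leq QN/N$, whence $\Phi(P)\leq Q$. Next, Theorem~\ref{know} (applied with $d=|P|/p^2$, with the exceptional case $P\cong Q_8$ dealt with separately since there $\Phi(P)$ is the unique 2-maximal subgroup) yields that $G$ has $p$-length at most~$1$, so $P=O_p(G)$ is normal in $G$.

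With $P\unlhd G$, Lemma~\ref{two} tells us $Q$ is a partial {\rm CAP}-subgroup of $G$. I would take a chief series in which $Q$ covers or avoids each factor and refine it to pass through $\Phi(P)$ (partial {\rm CAP} is preserved under refinement, since covering an interval $L'<L$ with $L\leq QL'$ passes to every subinterval, and likewise for avoidance). Writing $W_i$ for the chief factors inside $\Phi(P)$ and $V_j$ for those inside $P/\Phi(P)$, a telescoping argument on $|QL_i:QL_{i-1}|$ gives $\prod_{\text{avoided }W_i}|W_i|=|M:Q|=p$ and $\prod_{\text{avoided }V_j}|V_j|=|P:M|=p$. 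Hence exactly one $W_{i^*}$ is avoided, of order $p$, and exactly one $V_{j^*}$ is avoided, of order $p$, all other chief factors inside $P$ being covered.

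Since the $p$-rank of $G$ exceeds~$1$, there is a chief factor $T$ of $G$ with $|T|\geq p^2$; as $P=O_p(G)$, $T\leq P$, and $T$ is covered (avoided factors all have order $p$). If $T$ lies in $P/\Phi(P)$, I would apply Lemma~\ref{dim} to $G/\Phi(P)$: its Sylow $p$-subgroup is elementary abelian of $p$-rank greater than~$1$ (witnessed by $T$), and by Lemma~\ref{over} every 2-maximal subgroup of $P/\Phi(P)$ inherits the partial $\Pi$-property in $G/\Phi(P)$. Lemma~\ref{dim} then forces every irreducible $\mathbb{F}_p[H]$-submodule of $P/\Phi(P)$ to have dimension $>1$, contradicting the existence of the 1-dimensional submodule $V_{j^*}$. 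The hard part will be the remaining case $T\leq\Phi(P)$, in which $G/\Phi(P)$ may be $p$-supersoluble and Lemma~\ref{dim} is not directly available. My plan is to combine the $P$-centrality of $T$ in $P/L_{k-1}$ (which, together with the partial $\Pi$-property of $Q$ applied at the chief factor $T=L_k/L_{k-1}$, forces the image of $Q\cap L_k$ in $T$ to be $G$-invariant, hence all of $T$) with the size constraint $|P/\Phi(P)|\geq p^4$ from Lemma~\ref{orderp2} and a careful case analysis of the position of $T$ relative to the avoided $W_{i^*}$ in the refined chief series, in order to force the contradiction $QL_k=P\leq M$.
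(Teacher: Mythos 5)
Your first half coincides with the paper's argument: reduce to $O_{p'}(G)=1$ via Lemma~\ref{over}, get $P=O_p(G)$ normal from Theorem~\ref{know} (your separate treatment of the $d=2$, $P\cong Q_8$ case is a sensible extra precaution), and then invoke Lemma~\ref{two} to make $Q$ a partial {\rm CAP}-subgroup. At that point the paper simply quotes Lemmas 10 and 12 of Ballester-Bolinches--Ezquerro--Skiba (the reference \cite{Adolfo-JPAA}), which contain exactly the technical analysis of $2$-maximal partial {\rm CAP}-subgroups of $O_p(G)$ that you are trying to reconstruct by hand; your attempt at this reconstruction has two genuine gaps.

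First, the step ``take a chief series in which $Q$ covers or avoids each factor and refine it to pass through $\Phi(P)$'' does not work as stated. A chief series is an unrefinable normal series, so $\Phi(P)$ can be inserted only if it is already one of its terms; in general the chief series witnessing the partial {\rm CAP}-property need not be comparable with $\Phi(P)$ at all. Your parenthetical remark (cover/avoid passes to subintervals) is correct but irrelevant, because passing to a series through $\Phi(P)$ means replacing terms $L_i$ by $L_i\cap\Phi(P)$ and $L_i\Phi(P)$, and covering or avoiding $L_i/L_{i-1}$ does not transfer to these new factors -- this is precisely the difference between the partial {\rm CAP}-property (one chief series) and the {\rm CAP}-property (all chief series). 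Consequently the telescoping identities $\prod_{\text{avoided }W_i}|W_i|=|M:Q|=p$ and $\prod_{\text{avoided }V_j}|V_j|=|P:M|=p$, and hence the existence of the order-$p$ factor $V_{j^*}$ above $\Phi(P)$ on which your contradiction in the first case rests, are unsupported. Second, even granting that setup, the case $T\leq\Phi(P)$ -- which you yourself identify as the hard part -- is only a plan, not a proof: no argument is given that forces $QL_k=P\leq M$, and it is exactly this situation ($G/\Phi(P)$ possibly $p$-supersoluble) that requires the delicate analysis carried out in \cite{Adolfo-JPAA}. So the proposal is incomplete where it departs from the paper; to close it you would either have to cite those lemmas, as the paper does, or supply a genuinely new argument transferring the cover--avoidance data to a series through $\Phi(P)$ and settling the $T\leq\Phi(P)$ case.
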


\begin{proof}
	We argue by induction on $ |G| $. By Lemma \ref{over}, $G/O_{p'}(G)$ inherits the hypotheses of the lemma. Hence we may assume that $ O_{p'}(G) = 1 $. In view of Theorem \ref{know}, we have $ P=O_{p}(G)=F(G) $. By Lemma \ref{two}, every $ 2 $-maximal subgroup of $ P $ is  a partial {\rm CAP}-subgroup of $ G $. Applying \cite[Lemmas 10 and 12]{Adolfo-JPAA}, we conclude that $ \Phi(P) $ is contained in $ Q $.
\end{proof}

\begin{proof}[Proof of Theorem \ref{2-minimal}]
	
Applying \cite[Theorem 1.3]{Qiu-Liu-Chen}, we have that $ G $ is $ p $-soluble of $p$-length at most $1$. Since $ O_{p'}(G)=1 $, it follows that $P=O_{p}(G)$ and so $ G=P\rtimes H $, where $ H\in \mathrm{Hall}_{p'}(G) $, by the Schur-Zassenhaus Theorem.
	
Assume that $ G $ is not $ p $-supersoluble. If $ |P|=p^{2} $, then $ P$ is a minimal normal subgroup of $ G $. Consequently, $ G $ is of type~(2).
	
Suppose that $ |P|\geq p^{3} $. By Lemma \ref{orderp2}, we have $ \Phi(P)=1 $.
	By Lemma \ref{dim}(1), $ P = V_{1} \times \cdot\cdot\cdot \times V_{s} $, $ s \geq 2 $, where $ V_{i} $ is an $ H $-isomorphic irreducible $ \mathbb{F}_{p}[H] $-module of dimension $ 2 $ for all $1\leq i\leq s$.  Furthermore,  $ s \geq 2 $ and $ |P|\geq p^{4} $. Applying Lemma \ref{cyclic} and and Lemma \ref{dim}(2), we have that $ H $ is cyclic. Therefore, $ G $ is of type (3).
\end{proof}

\begin{proof}[Proof of Theorem \ref{2-maximal}]
	Assume that $ |P| \geq p^{2} $ and $ G $ is not $ p $-supersoluble. Suppose that $ |P|=p^{2} $. If $ G $ is $ p $-soluble, then $P=O_{p}(G)$ is a minimal normal subgroup of $G$ and $G$ is of type~(2). If $ G $ is non-$ p $-soluble, then $ G $ is of type (3).

Assume that $ |P| \geq p^{3} $. If $ \frac{|P|}{p^{2}}=2 $, then $ |P|=8 $. Since $G$ is not
$2$-supersoluble, we can apply \cite[Proposition 1.6]{Chen-2013}  to conclude that $ P\cong Q_{8} $. Hence $ G $ is of type~(4).
	
Suppose that $ d = \frac{|P|}{p^{2}}>2 $. By Theorem \ref{know}, we have that $ G $ is $ p $-soluble of $p$-length at most $1$. Since $ O_{p'}(G)=1 $, it follows that $P=O_{p}(G)$ and so $ G=P\rtimes H $, where $ H\in \mathrm{Hall}_{p'}(G) $, by the Schur-Zassenhaus Theorem.  By Lemma \ref{orderp2}, we have $ \frac{|P|}{|\Phi(P)|}> \frac{d}{|\Phi(P)|}\geq p^{2} $. Applying Lemma \ref{over}, every $ 2 $-maximal subgroup of $ P/\Phi(P) $ satisfies the partial $ \Pi $-property in $ G/\Phi(P) $.
	By Lemma \ref{dim}, we have  $ P/\Phi(P) = V_{1} \times \cdot\cdot\cdot \times V_{s} $, where $ V_{i} $ is an $ H $-isomorphic irreducible $ \mathbb{F}_{p} [H] $-module of dimension $ 2 $, which is not absolutely irreducible, for all $1\leq i\leq s $.  It follows from \ref{cyclic} that $ H $ is cyclic. Note that $ |P|  > d \geq p^{2}|\Phi(P)| $ and $ \Phi(P) $ is contained in every $ 2 $-maximal subgroup of $ P $ by Lemma \ref{contained-in}. Hence $ \Phi(P) $ equals to the intersection of all $ 2 $-maximal subgroups of $ P $. Therefore $ G $ is of type (5).
\end{proof}

\begin{proof}[Proof of Theorem \ref{prime-power}]
	Applying Theorem~\ref{know}, we have that $ G $ is $ p $-soluble of $p$-length at most $1$. Since $ O_{p'}(G)=1 $, it follows that $P=O_{p}(G)$ and so $ G=P\rtimes H $, where $ H\in \mathrm{Hall}_{p'}(G) $, by the Schur-Zassenhaus Theorem.
By Lemma \ref{orderp2}, we know  $ \frac{d}{|\Phi(P)|}\geq p^{2} $. According to Lemma \ref{over}, every subgroup of $ P/\Phi(P) $ of order $ \frac{d}{|\Phi(P)|} $ satisfies  the partial $ \Pi $-property in $ G/\Phi(P) $. Since $ P/\Phi(P) $ is  elementary abelian, all subgroups of $ P/\Phi(P) $ of order $ \frac{d}{|\Phi(P)|} $ are complemented in $ G/\Phi(P) $ by Lemma \ref{completed}. By  Lemma~\ref{dim}, $G$ satisfies  Statements~(1) and~(2). We also have $O_{p'}(G/\Phi(P))=1$ and so $H\Phi(P)/\Phi(P)$ acts faithfully on $ P/\Phi(P) $ by \cite[Theorem~6.3.2]{Gorenstein-1980}. By Lemma \ref{Completed}, $H$ is cyclic.
\end{proof}

%Since $ O_{p'}(G)=1 $, it follows from Lemma \ref{Phi} that $ \Phi(P) = \Phi(G) $. By Theorem \ref{first}, we have  $ P/\Phi(P) = V_{1} \times \cdot\cdot\cdot \times V_{s} $, $ s \geq 2 $, where $ V_{i} $ $ (1\leq i\leq s) $ are $ H $-isomorphic irreducible $ \mathbb{F}_{p} [H] $-modules of dimension $ 2 $.

\section*{Acknowledgments}

    The first author has been supported by the Natural Science Foundation Project of CQ (No.cstc2021jcyj-msxmX0426). The first author also thanks the China Scholarship Council and the Departament de Matem$ \grave{\mathrm{a}} $tiques of the Universitat de Val$ \grave{\mathrm{e}} $ncia for its hospitality.

    %\vskip1cm

    \small

    %\bibliographystyle{unsrt}
    %\bibliographystyle{plainnat}
    %\bibliographystyle{plain}
    %\bibliography{sample}

\end{sloppypar}
\end{document}